\newtheorem{theorem}{Theorem}[section]
\newtheorem{lemma}[theorem]{Lemma}
\theoremstyle{definition}
\newtheorem{definition}[theorem]{Definition}
\newtheorem{remark}[theorem]{Remark}
\numberwithin{equation}{section}
\numberwithin{equation}{section}
\newsavebox{\savepar}
\begin{document}
	
	\title{An existence result for singular fractional Kirchhoff-Schr\"{o}dinger-Poisson system}
	\author{Sekhar Ghosh\footnote{sekharghosh1234@gmail.com}\\
		\small{Department of Mathematics, National Institute of Technology Rourkela, India}
	}
	\date{}
	\maketitle

	\begin{abstract}
		\noindent In this paper, we study the existence of infinitely many weak solutions to a fractional Kirchhoff-Schr\"{o}dinger-Poisson system involving the weak singularity, i.e. when $0<\gamma<1$. Further, we obtain the existence of a solution with the strong singularity, i.e. when $\gamma>1$. We employ variational techniques to prove the existence and multiplicity results. Moreover, a $L^{\infty}$ estimate is obtained by using the Moser iteration method.\\
		{\bf keywords}: Fractional Sobolev Space, Genus, Symmetric mountain pass theorem, Nehari manifold, Singularity.\\
		{\bf AMS(2020) classification}:~35R11, 35J75, 35J20, 35J60, 35J10.
	\end{abstract}
	\section{Introduction}
	\noindent In this paper, we study the following fractional Kirchhoff-Schr\"{o}dinger-Poisson system involving a singular term.	
	\begin{align}\label{problem main}
	\begin{split}
	\left(a+b\int_{Q}\frac{(u(x)-u(y))^2}{|x-y|^{N+2s}}\right)(-\Delta)^{s} u+\phi u&=\lambda h(x)u^{-\gamma}+f(x,u)~\text{in}~\Omega,\\
		(-\Delta)^{s}\phi&=u^{2}~\text{in}~\Omega,\\
	u&>0~\text{in}~\Omega,\\
	u&=\phi=0~\text{in}~\mathbb{R}^N\setminus\Omega,
	\end{split}
	\end{align}
	where $\Omega$ is a smooth bounded domain in $\mathbb{R}^{N}$, $a, b\geq 0, a+b>0, \gamma> 0, \lambda>0$, $h\in L^{1}(\Omega)$, $h(x) >0$ a.e. in $\Omega$ and $f$ has some growth conditions.\\
	In the recent time elliptic PDEs involving singularity has drawn interest to many researchers for both the local as well as the nonlocal operators. A noteworthy application of the fractional Laplacian operator can be found in \cite{valdinoci2009long} and the references therein. Further the application can be seen in the field of fluid dynamics, in particular to the study the thin boundary layer properties for viscous fluids \cite{thin bdry}, in probability theory to study the Levy process \cite{bertoin1996levy}, in finance  \cite{cont2004financial}, in free boundary obstacle problems \cite{silvestre2008}. Another application of PDEs involving these type of nonlocal operator is in the field of image processing to find a clear image $u$ from a given noisy image $f$ \cite{bua,kinder}. Readers who are interested to know further details on applications of PDEs involving nonlocal operators can also refer to \cite{servadei2016book,dinezza2012}.\\
	The following model problem of the type \eqref{problem main} was first introduced by Kirchhoff \cite{kirchhoff prob} as a generalization of the D'Alembert wave equation.
	$$\rho\frac{\partial^2u}{\partial t^2}-\left(a+b\int_{0}^{l}\left|\frac{\partial u}{\partial x}\right|^2dx\right)\frac{\partial^2u}{\partial x^2}=g(x,u)$$
	where $a, b, \rho$ are positive constants and $l$ is the changes in the length of the strings due to the vibrations. Recently, Fiscella and Valdinoci (\cite{valdinoci2014}, Appendix A), introduced the fractional Kirchhoff type problem  considering the fractional length of the string for nonlocal measurements. A physical application of nonlocal Kirchhoff type problem can be found in \cite{pucci2016}.\\
	The problem \eqref{problem main} is said to be degenerate if $a=0$ and $b>0$. Otherwise, if both $a>0$ and $b>0$, we say the problem \eqref{problem main} is non degenerate. For $a>0$ and $b=0$, the problem \eqref{problem main} reduces to Schr\"{o}dinger-Poisson system. For $b=\phi=0$, a vast amount of study to prove the existence, multiplicity and regularity of solutions to the problem of type \eqref{problem main} has been done involving both the local operator ($s=1$) as well as the nonlocal operator ($0<s<1$) with a singularity for both $0<\gamma<1$ and $\gamma>1$ and a power nonlinearity or an $L^1$ data or both. The literature is so vast that it is almost impossible to enlist all of them here. A few of such studies can be found in \cite{boccardo2010semilinear,canino2017nonlocal,crandall1977dirichlet,positivity,ghosh2018singular,giacomoni2009multiplicity,lazer1991singular,lei2015,liao2015,liu2013,oliva,saoudi2017critical,ghosh2018multiplicity,sun2013,sun2001,sun2014,yang2003} and the references therein.\\
	For $b= f(x,u)=\phi(x)=0$, the problem \eqref{problem main}, reduces to a purely singular problem. In their celebrated article, Lazer and McKenna \cite{lazer1991singular}, have studied the purely singular problem involving the Laplacian operator, i.e. for $s=1$ and $b=f(x,u)=\phi(x)=0$. The authors in \cite{lazer1991singular}, has proved that the problem has a unique $C^1(\bar{\Omega})$ solution iff $0<\gamma<1$ and it has a $H_0^1(\Omega)$ solution iff $\gamma<3$. Later in \cite{sun2014}, the author proved that if $\gamma\geq3$, then the singular problem can not have $H_0^1(\Omega)$ solution.\\
	Similar to the study with the Laplacian operator, Canino et al. \cite{canino2017nonlocal}, have studied the nonlocal PDE involving singularity. In \cite{canino2017nonlocal}, the authors considered the problem
	\begin{align}\label{canino singular}
	(-\Delta_p)^s u&=\lambda\frac{a(x)}{u^{\gamma}} ~\&~ u>0 ~\text{in}~\Omega,\nonumber\\
	u&=0~\text{in}~\mathbb{R}^N\setminus\Omega.
	\end{align}
	The authors in \cite{canino2017nonlocal}, has guaranteed the existence of unique solution in $W_0^{s,p}(\Omega)$ for $0<\gamma\leq 1$ and in $W_{loc}^{s,p}(\Omega)$ for $\gamma>1.$ For $a(x)\equiv 1=\lambda$, Fang \cite{fang2014existence}, has proved the existence of a unique $C^{2, \alpha}(\Omega)$ solution for $0<\alpha<1$. One of the earliest study to show the existence of multiple solutions was made by Crandall et al. \cite{crandall1977dirichlet} involving the Laplacian operator. Further references on multiplicity involving local operator can be bound in \cite{giacomoni2009multiplicity} and the references therein. Recently, Saoudi et al. in \cite{ghosh2018multiplicity} has guaranteed the existence of at least two solutions by using min-max method with the help of modified Mountain Pass theorem involving fractional $p$-Laplacian operator. Saoudi in \cite{saoudi2017critical}, obtained two solutions involving fractional Laplacian operator For further references on the study of multiple solutions, refer \cite{giacomoni2009multiplicity,ghosh2018multiplicity} and the references therein.\\
	Recently, the study of the Kirchhoff-Schr\"{o}dinger-Poisson system has drawn interest to many researchers. See for instance \cite{fiscella2019,fiscella2019-1,lei2015,fuyi2013,li2017,liao2016,liao2015,liu2013,sun2013,sun2001,zhang2016,jmp} and the references therein for a detailed study of existence, uniqueness and multiplicity of Kirchhoff type problem. Most of these studies, the authors used variational techniques, in particular min-max method, sub-super solution method, Nehari manifold method and Mountain Pass theorem to guarantee the existence and multiplicity of solutions.\\
	In the recent past, for $a=0$ and $0<\gamma<1$, Fiscella \cite{fiscella2019} has obtained two distinct solution involving fractional Laplacian operator by variational technique. Later, for $a>0$, $0<\gamma<1$, Fiscella and Mishra \cite{fiscella2019-1} proved the multiplicity by Nehari manifold method. In \cite{liao2015,liu2013}, the authors studied the multiplicity of solutions involving singular nonlinearity. In \cite{fuyi2013}, Li and Zhang have studied the existence, uniqueness and multiplicity of solution(s) for Schr\"{o}dinger-Poisson system without compactness conditions. On the other hand, Zhang \cite{zhang2016} has studied for a Schr\"{o}dinger-Poisson system. For a detailed study on Schr\"{o}dinger-Poisson system, one can see \cite{antonio2008,antonio2013,ruiz2006} and there references therein. Liao et al. \cite{liao2016} has guaranteed the existence and uniqueness of solution for Kirchhoff type problem involving singularity. The author in \cite{sun2013}, provided a compatibility criterion to obtain the existence of solution for $b=\phi=0, \gamma>1$ and $f(x,t)=t^p, 0<p<1$. The author in \cite{sun2013}, proved that the problem \eqref{problem main} has a  $H_{0}^{1}(\Omega)$ solution if and only if the following compatibility condition for the pair $(h, \gamma)$
	 \begin{equation}\label{compatibility local}
	 \int_{\Omega}h(x)|u_{0}|^{1-\gamma}<\infty~\text{for some}~ u_{0}\in H_{0}^{1}(\Omega)
	 \end{equation} holds true.\\
	  Recently, Zhang \cite{jmp}, proved a necessary and sufficient condition for the existence of solution for a Kirchhoff-Schr\"{o}dinger-Poisson system involving Laplacian operator with strong singularity.\\
	 In their pioneering work, Ambrosetti and Rabinowitz \cite{ambrosetti1973dual} has guaranteed the existence of infinitely many solutions to the problem of the type \eqref{problem main} for $a=s=1$, $b=\lambda=\phi=0$ by introducing the well known (AR) condition on $f$. In fact the (AR) condition has proved to be an important tool to obtain multiplicity of solutions. One can see \cite{binlin2015superlinear,positivity,wang infi} and the references therein for further details on infinitely many solutions. For $b=\phi=\lambda=0$, Binlin et al. \cite{binlin2015superlinear}, has proved the existence of infinitely many solutions for a superlinear data $f$. The study of Kirchhoff type problem to obtain infinitely many solution can be found in \cite{figi2015,li converge,zhao infi} and the references therein. For $\lambda=0$, Li et al. \cite{wang infi} guaranteed the existence of infinitely many solutions to the problem \eqref{problem main} for a sublinear data $f$. Recently, for $0<\gamma<1$, $b=\phi=0$, Ghosh and Choudhuri \cite{positivity}, guaranteed the existence of infinitely many solutions involving the fractional Laplacian operator. In all of these studies referred here that consists of infinitely many solutions, the authors used the symmetric Mountain Pass theorem under the crucial assumption that the data $f$ is odd. The authors in \cite{positivity}, assumed the following growth conditions on $f$.
	 	\begin{itemize}
	 	\item[(A1)] $f\in C(\Omega\times\mathbb{R}, \mathbb{R})$ and $\exists$ $\delta>0$ such that $\forall\,x\in\Omega$ and $|t|\leq\delta,$  $f(x,-t)=-f(x,t).$
	 	\item[(A2)] $\lim\limits_{t\rightarrow0}\frac{f(x,t)}{t}=+\infty$ uniformly on $\Omega.$
	 	\item[(A3)] There exists $r>0$ and $p\in(1-\gamma, 2)$ such that $\forall,\,x\in\Omega$ and $|t|\leq r$, $tf(x,t)\leq pF(x,t)$, where $F(x,t)=\int_{0}^{t}f(x,\tau) d\tau.$
	 \end{itemize}
 	Motivated from \cite{positivity,wang infi,sun2013}, we consider the fractional Kirchhoff-Schr\"{o}dinger-Poisson system \eqref{problem main} involving singularity. To the best of the author's knowledge, there in no study of infinitely many solutions for a fractional Kirchhoff-Schr\"{o}dinger-Poisson system involving a power nonlinearity and a singularity ($0<\gamma<1$) in the literature. On the other hand, the study of the problem \eqref{problem main} with strong singularity ($\gamma>1$) is comparatively challenging and hence can also be seen as a new addition to the literature involving nonlocal operator. One can expect the compatibility condition \eqref{compatibility local} as in \cite{sun2013}, for the existence of $X_0$ solution(s) to be 
 	\begin{equation}\label{compatibility}
 	\int_{\Omega}h(x)|u_{0}|^{1-\gamma}<\infty~\text{for some}~ u_{0}\in X_0.
 	\end{equation} 	
 	The following two Theorems are the main results proved in this article.
	\begin{theorem}\label{main thm1}
		Assume $a, b\geq0, a+b>0$, $h\in L^{1}(\Omega), h>0$ a.e. in $\Omega$ and ($A1$)-($A3$) holds. Then for $0<\gamma<1$ and for any $\lambda\in(0,\Lambda)$, the problem \eqref{problem main} has a sequence of positive weak solutions ${u_n}\subset X_0\cap L^{\infty}(\Omega)$ such that $I(u_n) < 0$, $I(u_n)\rightarrow 0^{-}$ and $u_n \rightarrow 0$ in $X_0$. (See Section 2. for notations).
	\end{theorem}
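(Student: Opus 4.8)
The plan is to use the symmetric mountain pass theorem (via the Krasnoselskii genus) applied to a suitably truncated energy functional, following the strategy of \cite{positivity,wang infi}. First I would set up the functional framework: let $X_0$ be the fractional Sobolev space associated with $(-\Delta)^s$ with zero exterior data, and for each $u\in X_0$ solve the second (linear) equation $(-\Delta)^s\phi=u^2$ to obtain $\phi=\phi_u$ depending continuously and monotonically on $u$; substituting back yields an energy functional of the form
\begin{equation}
I(u)=\frac{a}{2}\|u\|_{X_0}^2+\frac{b}{4}\|u\|_{X_0}^4+\frac14\int_\Omega\phi_u u^2\,dx-\frac{\lambda}{1-\gamma}\int_\Omega h(x)|u|^{1-\gamma}\,dx-\int_\Omega F(x,u)\,dx
\end{equation}
on (a suitable subset of) $X_0$. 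Because the singular term $\int_\Omega h|u|^{1-\gamma}$ is only continuous (not $C^1$) and $F$ only has good structure for $|t|\le r$, I would replace $f$ by a truncation $\tilde f$ that agrees with $f$ on $|t|\le r$, is odd (using (A1) locally), and is controlled globally, and correspondingly work with the truncated functional $\tilde I$. The point of the truncation is that conditions (A2)–(A3) only need to hold near $t=0$, and the solutions we obtain will be small, so they solve the original problem.

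Next I would verify the hypotheses of the symmetric mountain pass / genus-based critical point theorem for $\tilde I$ on $X_0$: (i) $\tilde I$ is even, which follows from the oddness of $\tilde f$ and the fact that $\phi_{-u}=\phi_u$ and $|{-u}|^{1-\gamma}=|u|^{1-\gamma}$; (ii) $\tilde I$ is bounded below and satisfies the Palais–Smale condition at negative levels — here the coercivity-type control comes from the fact that, with $1-\gamma<1<2$ and $p<2$, the ``bad'' terms grow slower than the quadratic/quartic Kirchhoff terms after truncation, and compactness of the embedding $X_0\hookrightarrow L^q(\Omega)$ for $q<2^*_s$ gives the PS condition; (iii) for each $k\in\mathbb N$ there is a $k$-dimensional subspace $V_k\subset X_0$ and a small sphere on which $\tilde I<0$ — this uses (A2), i.e. $f(x,t)/t\to+\infty$ as $t\to0$, together with the singular term, which forces $\tilde I$ to be strictly negative on small spheres in finite-dimensional subspaces. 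From these, the genus argument produces a sequence of critical values $c_k=\inf_{A\in\Gamma_k}\sup_{u\in A}\tilde I(u)<0$ with $c_k\to 0^-$ and corresponding critical points $u_n$ with $\tilde I(u_n)=c_n\to 0^-$ and $u_n\to 0$ in $X_0$.

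Then I would pass from critical points of $\tilde I$ back to weak solutions of \eqref{problem main}: since $u_n\to 0$ in $X_0$, eventually $\|u_n\|_{L^\infty}\le r$ (using the $L^\infty$ estimate obtained via Moser iteration, which is stated as available in the paper), so $\tilde f(x,u_n)=f(x,u_n)$ and $u_n$ solves the original system; positivity $u_n>0$ follows by testing with $u_n^-$ and using the singular term (or a standard strong maximum principle / the structure of the singular equation), and the genus argument yields distinct (nontrivial) $u_n$. Finally $\Lambda$ is chosen so that for $\lambda<\Lambda$ the various smallness estimates in steps (ii)–(iii) hold simultaneously.

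The main obstacle I expect is the interplay between the singular term and the variational machinery: the functional $\tilde I$ is not differentiable in the usual sense because of $\int_\Omega h|u|^{1-\gamma}$ with $h\in L^1$ only, so one must either work on a constrained set where this term is finite and continuous (invoking a compatibility condition in the spirit of \eqref{compatibility}) or use a Clarke-subdifferential/nonsmooth version of the symmetric mountain pass theorem, and then carefully justify that the abstract critical points are genuine weak solutions in the sense of the appropriate test-function inequality. Establishing the Palais–Smale condition at negative levels in the presence of both the nonlocal Kirchhoff term $b\|u\|^4$ (which affects weak-continuity of the derivative) and the singular term is the delicate technical heart of the proof; everything else is a fairly standard adaptation of \cite{positivity}.
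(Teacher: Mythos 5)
Your proposal is correct and follows essentially the same route as the paper: reduce to the single equation with $\phi_u$, make the singular term odd and cut off $f$ near $0$ using (A1), apply Kajikiya's genus-based symmetric mountain pass theorem (boundedness below plus (PS) via coercivity and compact embeddings, and negative suprema on small spheres in finite-dimensional subspaces via (A2)), then recover solutions of the original system by proving nonnegativity through testing with $u_n^-$ and smallness in $L^\infty$ through Moser iteration. The only minor divergence is that the paper obtains $u_n\to 0$ in $X_0$ by an explicit (A3)-based estimate on $\tfrac1p\langle\tilde I'(u_n),u_n\rangle-\tilde I(u_n)$ rather than from the abstract theorem, and it does not engage the nonsmoothness caveat you rightly flag for the term $\int_\Omega h|u|^{1-\gamma}$.
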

 \begin{remark}
	Note that in Theorem \ref{main thm1} there is no any restriction condition for $f$ in $t$ at infinity.
\end{remark}
	\begin{theorem}\label{main theorem}
		Assume $a, b\geq0, a+b>0$, $h\in L^{1}(\Omega), h>0$ a.e. in $\Omega$ and $f(x,u)=k(x)u^p$ such that $k\in L^{\infty}(\Omega)$ with $k>0$, $0<p<1$. Then, for $\gamma>1$ and for any $\lambda\in(0,\Lambda)$, the problem \eqref{problem main} has a weak solution in $X_0$ if and only if \eqref{compatibility} holds true.
	\end{theorem}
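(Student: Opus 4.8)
The plan is to establish the two implications separately; the substantial work is in the sufficiency direction, which rests on minimizing a globally defined energy.

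\medskip
\noindent\textbf{Necessity.} If $u\in X_0$ is a weak solution, then $u>0$ in $\Omega$, and testing the equation against $u$ itself gives
\[
\bigl(a+b\|u\|_{X_0}^{2}\bigr)\|u\|_{X_0}^{2}+\int_{\Omega}\phi_{u}u^{2}\,dx=\lambda\int_{\Omega}h(x)u^{1-\gamma}\,dx+\int_{\Omega}k(x)u^{p+1}\,dx .
\]
The left-hand side is finite, and $\int_{\Omega}ku^{p+1}<\infty$ since $k\in L^{\infty}(\Omega)$ and $X_0\hookrightarrow L^{p+1}(\Omega)$; hence $\int_{\Omega}hu^{1-\gamma}<\infty$, i.e. \eqref{compatibility} holds with $u_0=u$. (If the adopted notion of weak solution only admits test functions in $C_c^{\infty}(\Omega)$, one first approximates $u$ in $X_0$ by such functions and passes to the limit, using Fatou's lemma for the singular term.)

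\medskip
\noindent\textbf{Sufficiency.} Assume \eqref{compatibility} holds for some $u_0\in X_0$; since $\int_{\Omega}h|u_0|^{1-\gamma}$ is unchanged under $u_0\mapsto|u_0|$ and $\|\,|u_0|\,\|_{X_0}\le\|u_0\|_{X_0}$, we may take $u_0\ge0$. Because $\gamma>1$ we have $\tfrac{\lambda}{\gamma-1}>0$, so
\[
J(u)=\frac{a}{2}\|u\|_{X_0}^{2}+\frac{b}{4}\|u\|_{X_0}^{4}+\frac14\int_{\Omega}\phi_{u}u^{2}\,dx+\frac{\lambda}{\gamma-1}\int_{\Omega}h(x)|u|^{1-\gamma}\,dx-\frac{1}{p+1}\int_{\Omega}k(x)|u|^{p+1}\,dx
\]
is well defined on all of $X_0$ with values in $(-\infty,+\infty]$, and \eqref{compatibility} is exactly the statement $J(u_0)<\infty$. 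Since $p+1<2$ and $a+b>0$, the leading terms dominate $C\|u\|_{X_0}^{p+1}$, so $J$ is coercive and bounded below; it is also weakly lower semicontinuous (the norm terms because they are continuous and convex, the Poisson term $u\mapsto\int_{\Omega}\phi_uu^2$ by the standard argument, the singular term by Fatou since its integrand is nonnegative, and the $k$–term by the compact embedding $X_0\hookrightarrow\hookrightarrow L^{p+1}(\Omega)$). Hence $J$ attains a global minimum at some $\bar u\in X_0$, and since $J(|u|)\le J(u)$ we may take $\bar u\ge0$. Finiteness $J(\bar u)\le J(u_0)<\infty$ forces $\int_{\Omega}h\bar u^{1-\gamma}<\infty$, so, as $1-\gamma<0$ and $h>0$ a.e., $\bar u>0$ a.e.\ in $\Omega$.

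\medskip
\noindent\textbf{The minimizer solves the system.} Along the ray $t\mapsto(1+t)\bar u$ every term of $J$ is smooth near $t=0$ (the singular term equals $(1+t)^{1-\gamma}\tfrac{\lambda}{\gamma-1}\int_{\Omega}h\bar u^{1-\gamma}$, which is finite), so $\tfrac{d}{dt}J\bigl((1+t)\bar u\bigr)\big|_{t=0}=0$ yields
\[
\bigl(a+b\|\bar u\|_{X_0}^{2}\bigr)\|\bar u\|_{X_0}^{2}+\int_{\Omega}\phi_{\bar u}\bar u^{2}\,dx=\lambda\int_{\Omega}h\bar u^{1-\gamma}\,dx+\int_{\Omega}k\bar u^{p+1}\,dx .
\]
For $\varphi\in X_0$ with $\varphi\ge0$, convexity of $s\mapsto s^{1-\gamma}$ makes the difference quotient of the singular term monotone in $t>0$, so by monotone convergence $\lim_{t\to0^{+}}\tfrac1t\bigl(J(\bar u+t\varphi)-J(\bar u)\bigr)$ exists and, being $\ge0$ because $\bar u$ is a minimizer, gives
\[
\bigl(a+b\|\bar u\|_{X_0}^{2}\bigr)\langle\bar u,\varphi\rangle_{X_0}+\int_{\Omega}\phi_{\bar u}\bar u\varphi\,dx-\lambda\int_{\Omega}h\bar u^{-\gamma}\varphi\,dx-\int_{\Omega}k\bar u^{p}\varphi\,dx\ \ge\ 0 ,
\]
in particular $\int_{\Omega}h\bar u^{-\gamma}\varphi<\infty$. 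For arbitrary $\varphi\in X_0$ and $\varepsilon>0$, inserting the admissible function $(\bar u+\varepsilon\varphi)^{+}$ into this inequality, splitting the integrals over $\{\bar u+\varepsilon\varphi\ge0\}$ and its complement (whose measure tends to $0$ since $\bar u>0$ a.e.), subtracting the identity above to cancel the zeroth–order terms, dividing by $\varepsilon$ and letting $\varepsilon\to0^{+}$ (the singular integral over the complement has a favourable sign, and the remaining contributions from the complement vanish) produces the reverse inequality. Hence
\[
\bigl(a+b\|\bar u\|_{X_0}^{2}\bigr)\langle\bar u,\varphi\rangle_{X_0}+\int_{\Omega}\phi_{\bar u}\bar u\varphi\,dx=\lambda\int_{\Omega}h\bar u^{-\gamma}\varphi\,dx+\int_{\Omega}k\bar u^{p}\varphi\,dx\qquad\text{for all }\varphi\in X_0,
\]
with $\phi_{\bar u}\in X_0$ the unique solution of $(-\Delta)^s\phi=\bar u^2$. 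Since $a+b\|\bar u\|_{X_0}^{2}>0$ (automatic if $a>0$; if $a=0$ because $\bar u\not\equiv0$), $\bar u$ is a weak supersolution of $(-\Delta)^s w+c(x)w=0$ with $c=\phi_{\bar u}/(a+b\|\bar u\|_{X_0}^{2})\ge0$, so $\bar u>0$ in $\Omega$ by the fractional strong maximum principle; thus $\bar u$ is the desired weak solution in $X_0$.

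\medskip
\noindent\textbf{Main difficulty.} The delicate point is the passage from the one–sided variational inequality to the Euler--Lagrange equality in the strongly singular regime: $J$ is non-differentiable, its directional derivative in the singular term may equal $-\infty$ in some directions, and the truncated test functions $(\bar u+\varepsilon\varphi)^{+}$ must be controlled on the set where $\bar u$ is small and $\bar u^{-\gamma}$ is large. This is precisely where the compatibility condition \eqref{compatibility} (which is what guarantees $\inf_{X_0}J<+\infty$, hence the existence of a minimizer) and the a.e.\ positivity of $\bar u$ enter. The nonlocal Kirchhoff coefficient causes no extra trouble here, since it is built into $J$ and differentiated directly rather than handled through an approximation–and–limit procedure.
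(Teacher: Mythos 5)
Your argument is correct in substance, but it follows a genuinely different route from the paper for the sufficiency part. The paper never minimizes the free functional: it introduces the Nehari-type sets $N_1,N_2$, proves $N_1$ is closed and $I$ coercive and bounded below there (Lemma \ref{l2.}), extracts a minimizing sequence via Ekeland's variational principle, shows the limit lies in $N_2$ (Lemma \ref{lemma min-seq}), and then obtains the one-sided inequality \eqref{1.7} through a two-case analysis (minimizing sequence in $N_1\setminus N_2$ or in $N_2$) that requires continuity of the fibering map $\theta$ (Lemma \ref{l2.2}) and uniform two-sided bounds on the Dini derivatives $D_+\theta_n(0)$, exploiting the quantitative Ekeland condition. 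You instead minimize $I$ (your $J$, which coincides with \eqref{functional} since $-\tfrac{\lambda}{1-\gamma}=\tfrac{\lambda}{\gamma-1}>0$ for $\gamma>1$) globally on $X_0$: because $0<p<1$ the free energy is coercive and bounded below, \eqref{compatibility} is exactly $\inf I<\infty$, and the minimizer is positive a.e.; you then get the Nehari identity from the ray $t\mapsto(1+t)\bar u$ and the inequality \eqref{1.7} directly from minimality, using convexity of $s\mapsto s^{1-\gamma}$ and monotone convergence for the singular difference quotients. From \eqref{1.7} onward (the $(\bar u+\varepsilon\varphi)^+$ truncation, cancellation via the Nehari identity, smallness of the bad set) your argument and the paper's final step coincide, as does the uniqueness argument's spirit. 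What each approach buys: yours is shorter and avoids Ekeland and the delicate Dini-derivative estimates entirely, which is possible precisely because the perturbation $k(x)u^p$ is sublinear so the unconstrained functional is coercive; the paper's constrained Ekeland/Nehari machinery (inherited from Sun and Zhang) is heavier but more robust, surviving situations where the free energy is not coercive. Two points you should make explicit to match the paper's (implicit) level of rigor: the weak lower semicontinuity/weak continuity of $u\mapsto\int_\Omega\phi_u u^2$ rests on the properties in Lemma \ref{phi prop} (and the tacit dimensional restriction they require), and the splitting of the nonlocal quadratic form over $\Omega_1,\Omega_2$ in the truncation step is the same formal manipulation the paper performs; neither is a gap relative to the paper, but both deserve a sentence. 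Your strong-maximum-principle remark even gives $\bar u>0$ in $\Omega$, slightly more than the paper's a.e. positivity.
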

	\begin{remark}
		If we assume $f(x,\cdot)\equiv0$, then the problem \eqref{problem main} possesses a unique solution.
	\end{remark}
	\noindent The paper is organised as follows. In Section 2, we will first give some mathematical formulation and define the space $X_0$. Moreover, we will discuss some preliminary properties of $\phi$ and prove that $\Lambda$ has a finite range. In the subsequent sections, Section 3 and Section 4, we will obtain the results as stated in Theorem \ref{main thm1} and Theorem \ref{main theorem} respectively.
	
	\section{Mathematical formulations}
	This section is devoted to give a few important results of fractional Sobolev spaces, embeddings, variational formulations and space setup. Let $\Omega$ be open bounded domain of $\mathbb{R}^N$ and $Q=\mathbb{R}^{2N}\setminus((\mathbb{R}^{N}\setminus\Omega)\times(\mathbb{R}^{N}\setminus\Omega))$. For $0<s<1$, the space ($X,\|.\|$), which is an intermediary Banach space between $H^1(\Omega)$ and $L^2(\Omega)$, is defined as
	 \begin{eqnarray}
	X&=&\left\{u:\mathbb{R}^N\rightarrow\mathbb{R}~\text{is measurable}, u|_{\Omega}\in L^2(\Omega) ~\text{and}~\frac{|u(x)-u(y)|}{|x-y|^{\frac{N+2s}{2}}}\in L^{2}(Q)\right\}\nonumber
	\end{eqnarray}
	equipped with the norm 
	\begin{eqnarray}
	\|u\|_X&=&\|u\|_{2}+[u]_2,\nonumber
	\end{eqnarray}
where $[u]_2=\left(\int_{Q}\frac{|u(x)-u(y)|^2}{|x-y|^{N+2s}}dxdy\right)^{\frac{1}{2}}$ refers to the Gagliardo semi norm. Due to the zero Dirichlet boundary condition, it is natural to consider the space
	$$X_0=\{u\in X:u=0 ~\text{a.e. in}~\mathbb{R}^N \setminus\Omega\},$$ endowed with the following Gagliardo norm on it. 
	\begin{eqnarray}
	\|u\|&=&\left(\int_{Q}\frac{|u(x)-u(y)|^2}{|x-y|^{N+2s}}dxdy\right)^{\frac{1}{2}}.\nonumber
	\end{eqnarray}
	The space $(X_0, \|.\|)$ is a Hilbert space \cite{servadei2012mountain}. The best Sobolev constant is defined as 
	\begin{equation}\label{sobolev const}
	S=\underset{u\in X_0\setminus\{0\}}{\inf}\cfrac{\int_{Q}\frac{|u(x)-u(y)|^2}{|x-y|^{N+2s}}dxdy}{\left(\int_\Omega|u|^{2_s^*}dx\right)^{\frac{2}{2_s^*}}}
	\end{equation}
	Let $\Omega$ be a bounded Lipschitz domain in $\mathbb{R}^N$. Then for every $q\in[1, 2_s^*]$, the space $X_0$ is continuously embedded in $L^q(\Omega)$ and for every $q\in[1, 2_s^*)$, the space $X_0$ is compactly embedded in $L^q(\Omega)$, where $2_s^*=\frac{2N}{N-2s}.$ Prior to define the weak solution to our problem, let us first consider the following problem
	\begin{align}\label{poisson eqn}
	(-\Delta)^{s}\phi=u^{2}~\text{in}~\Omega,\nonumber\\
	\phi=0~\text{in}~\mathbb{R}^N\setminus\Omega.
	\end{align}
	In light of the Lax-Milgram theorem, for every $u\in  X_0$, the problem \eqref{poisson eqn} has a unique solution $\phi_{u}\in  X_0$ and we have the following Lemma consisting some properties of the solution $\phi_{u}$.
	\begin{lemma}\label{phi prop}
		For each solution $\phi_u\in X_0$ of \eqref{poisson eqn}, we have
		\begin{enumerate}[label=(\roman*)]
			\item $\|\phi_{u}\|^{2}=\int_{\Omega}\phi_{u}u^{2}dx=\int_{\Omega}|(-\Delta)^{s/2}\phi_{u}|^{2}dx\leq C_{\phi}\|u\|^{4}, ~\forall~u\in X_0$;
			\item $\phi_{u}\geq 0$. Moreover, $\phi_{u}>0$ if $u\neq 0$;
			\item for all $t\neq 0$, $\phi_{tu}=t^{2}\phi_{u}$;
			\item $\|u_n-u\|\rightarrow 0$ implies that $\|\phi_{u_n}-\phi_u\|\rightarrow 0$ and $\int_{\Omega}\phi_{u_{n}}u_{n}vdx\rightarrow\int_{\Omega}\phi_{u}uvdx$, for any $v\in X_0$;
			\item for any $u, v\in X_0$, we have $\int_{\Omega}(\phi_{u}u-\phi_{v}v)(u-v)dx\geq\frac{1}{2}\|\phi_{u}-\phi_{v}\|^{2}.$
		\end{enumerate}	
	\end{lemma}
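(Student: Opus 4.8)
The natural starting point is the weak formulation coming from Lax--Milgram: for $u\in X_0$ the function $\phi_u\in X_0$ is the unique element satisfying
\begin{equation*}
\langle \phi_u,\varphi\rangle=\int_\Omega u^2\varphi\,dx\qquad\text{for all }\varphi\in X_0,
\end{equation*}
where $\langle\cdot,\cdot\rangle$ is the inner product of $X_0$ (equivalently $\int_{\mathbb{R}^N}(-\Delta)^{s/2}\phi_u\,(-\Delta)^{s/2}\varphi\,dx$ up to the usual normalizing constant). Each of the five items is then obtained by a judicious choice of test function together with H\"older's inequality and the embeddings $X_0\hookrightarrow L^q(\Omega)$; note that $2(2_s^*)'=\tfrac{4N}{N+2s}\le 2_s^*$ in the dimension range under consideration, so $u^2\in L^{(2_s^*)'}(\Omega)$ whenever $u\in X_0$, which is exactly what makes the right-hand side a bounded functional on $X_0$.

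For (i) I would take $\varphi=\phi_u$, giving $\|\phi_u\|^2=\int_\Omega u^2\phi_u\,dx=\int_{\mathbb{R}^N}|(-\Delta)^{s/2}\phi_u|^2\,dx$ directly from the definition of the norm; then H\"older yields $\int_\Omega u^2\phi_u\le\|u\|_{L^{2(2_s^*)'}}^2\,\|\phi_u\|_{L^{2_s^*}}$, and applying $X_0\hookrightarrow L^{2_s^*}$ and $X_0\hookrightarrow L^{2(2_s^*)'}$ gives $\|\phi_u\|\le C\|u\|^2$, i.e. the claim with $C_\phi=C^2$. For (ii) I would test with $\varphi=\phi_u^-:=\max\{-\phi_u,0\}$: the right-hand side is $\ge0$ because $u^2\ge0$, while the pointwise inequality $(\phi_u(x)-\phi_u(y))(\phi_u^-(x)-\phi_u^-(y))\le-(\phi_u^-(x)-\phi_u^-(y))^2$ forces $\langle\phi_u,\phi_u^-\rangle\le-\|\phi_u^-\|^2$; hence $\phi_u^-\equiv0$, i.e. $\phi_u\ge0$, and if $u\not\equiv0$ then $u^2\not\equiv0$ and the strong maximum principle for $(-\Delta)^s$ upgrades this to $\phi_u>0$ in $\Omega$. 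Item (iii) is immediate from uniqueness, since $t^2\phi_u$ solves $(-\Delta)^s(t^2\phi_u)=(tu)^2$ in $\Omega$ with zero exterior data.

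For (iv) I would subtract the equations for $\phi_{u_n}$ and $\phi_u$ and test the difference with $\phi_{u_n}-\phi_u$, obtaining $\|\phi_{u_n}-\phi_u\|^2=\int_\Omega(u_n^2-u^2)(\phi_{u_n}-\phi_u)\,dx$; writing $u_n^2-u^2=(u_n-u)(u_n+u)$ and estimating as in (i) gives $\|\phi_{u_n}-\phi_u\|\le C\|u_n-u\|\,(\|u_n\|+\|u\|)\to0$. The convergence $\int_\Omega\phi_{u_n}u_nv\to\int_\Omega\phi_u uv$ then follows from the splitting $\phi_{u_n}u_nv-\phi_u uv=(\phi_{u_n}-\phi_u)u_nv+\phi_u(u_n-u)v$ and H\"older with the three exponents $2_s^*,\,2(2_s^*)',\,2(2_s^*)'$, using the embeddings together with the boundedness of $\{\phi_{u_n}\}$ and $\{u_n\}$ in $X_0$.

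Item (v) is the one requiring a (small) trick. Expanding and using $\int_\Omega\phi_u u^2=\|\phi_u\|^2$, $\int_\Omega\phi_v v^2=\|\phi_v\|^2$, one gets $\int_\Omega(\phi_u u-\phi_v v)(u-v)\,dx=\|\phi_u\|^2+\|\phi_v\|^2-\int_\Omega(\phi_u+\phi_v)uv\,dx$. Since $\phi_u,\phi_v\ge0$ by (ii), Young's inequality $uv\le\tfrac12(u^2+v^2)$ gives $\int_\Omega(\phi_u+\phi_v)uv\le\tfrac12\int_\Omega(\phi_u+\phi_v)(u^2+v^2)$, and testing the $\phi_v$-equation with $\phi_u$ (and the $\phi_u$-equation with $\phi_v$) shows $\int_\Omega\phi_u v^2=\int_\Omega\phi_v u^2=\langle\phi_u,\phi_v\rangle$, whence $\int_\Omega(\phi_u+\phi_v)uv\le\tfrac12(\|\phi_u\|^2+\|\phi_v\|^2+2\langle\phi_u,\phi_v\rangle)$; substituting back yields $\int_\Omega(\phi_u u-\phi_v v)(u-v)\,dx\ge\tfrac12\|\phi_u\|^2+\tfrac12\|\phi_v\|^2-\langle\phi_u,\phi_v\rangle=\tfrac12\|\phi_u-\phi_v\|^2$. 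I expect no genuine difficulty here: the only things to be careful about are keeping the H\"older exponents inside the admissible Sobolev range (this is where the dimension restriction is used) and, for (v), combining Young's inequality with the cross-testing identity $\int_\Omega\phi_u v^2=\langle\phi_u,\phi_v\rangle$.
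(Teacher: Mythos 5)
Your proof is correct in all five items, and in fact it supplies something the paper does not contain: the paper states Lemma \ref{phi prop} without any proof at all, merely invoking Lax--Milgram for the existence and uniqueness of $\phi_u$ and treating (i)--(v) as known facts from the Schr\"odinger--Poisson literature. So there is no argument in the paper to compare yours against; what you wrote is the standard proof one would expect those references to contain, and each step checks out: the choice $\varphi=\phi_u$ and H\"older/embedding for (i); the sign inequality $(\phi_u(x)-\phi_u(y))(\phi_u^-(x)-\phi_u^-(y))\leq-(\phi_u^-(x)-\phi_u^-(y))^2$ plus the strong maximum principle for $(-\Delta)^s$ for (ii); uniqueness for (iii); testing the difference of the two equations with $\phi_{u_n}-\phi_u$ and the three-exponent H\"older splitting for (iv); and for (v) the expansion $\int_\Omega(\phi_u u-\phi_v v)(u-v)=\|\phi_u\|^2+\|\phi_v\|^2-\int_\Omega(\phi_u+\phi_v)uv$ combined with Young's inequality, the positivity from (ii), and the cross-testing identity $\int_\Omega\phi_u v^2=\int_\Omega\phi_v u^2=\langle\phi_u,\phi_v\rangle$, which indeed yields exactly $\tfrac12\|\phi_u-\phi_v\|^2$. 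The only point worth making explicit is the one you leave as a parenthetical: every H\"older/embedding step (already the boundedness of $\varphi\mapsto\int_\Omega u^2\varphi$ on $X_0$) requires $2(2_s^*)'=\tfrac{4N}{N+2s}\leq 2_s^*$, i.e. $N\leq 6s$ (e.g. $N=3$, $s\geq\tfrac12$). The paper never states this dimensional restriction even though the lemma is asserted ``for all $u\in X_0$'', so your proof is only as general as that implicit hypothesis; stating it would close the only loose end.
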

	\noindent Now by replacing $\phi_u$ in place of $\phi$ in \eqref{problem main}, the problem \eqref{problem main} reduces to the following Dirichlet boundary value problem
	\begin{align}\label{problem reduced}
	\begin{split}
	\left(a+b\int_{Q}\frac{(u(x)-u(y))^2}{|x-y|^{N+2s}}\right)(-\Delta)^{s} u+\phi_u u&=\lambda h(x)u^{-\gamma}+f(x,u)~\text{in}~\Omega,\\
	u&>0~\text{in}~\Omega,\\
	u&=0~\text{in}~\mathbb{R}^N\setminus\Omega,
	\end{split}
	\end{align}
	We now define a weak solution to the problem \eqref{problem reduced}.
	\begin{definition}\label{weak solution defn}
		A function $u\in X_0$ is a weak solution to the problem \eqref{problem reduced}, if $u>0$ and
	{\small \begin{align}\label{weak formulation}
		&(a+b[u]^{2})\int_{Q}(-\Delta)^{s/2} u\cdot(-\Delta)^{s/2}\psi+\int_{\Omega}\phi_{u}u\psi -\lambda\int_{\Omega}h(x)u^{-\gamma}\psi -\int_{\Omega}f(x,u)\psi=0, 
		\end{align}}
	for every $\psi\in X_0.$
	\end{definition}
	\noindent The associated energy functional to the problem \eqref{problem reduced} is defined as
	{\small \begin{align}\label{functional}
	I(u)=\frac{a}{2}\|u\|^{2}+\frac{b}{4}\|u\|^{4}&+\frac{1}{4}\int_{\Omega}\phi_{u}u^{2}-\frac{\lambda}{1-\gamma}\int_{\Omega}h(x)|u|^{1-\gamma}-\int_{\Omega}F(x,u),~ u\in X_0,
	\end{align}}
	\noindent where $F(x,u)=\int_{0}^{u}f(x,t)dt$. Observe that for $0<\gamma<1$, the term $\int_{\Omega}h(x)|u|^{1-\gamma}<\infty$ but the functional $I$ fails to be $C^1$. Therefore, by modifying the problem \eqref{problem main}, we will use the Kajikiya's Symmetric mountain pass theorem \cite{kajikiya2005critical} and a cut-off technique developed in \cite{clark1972variant} to obtain a $C^1$ functional to guarantee the existence of infinitely many solutions. On the other hand, for $\gamma>1$ the integral $\int_{\Omega}h(x)|u|^{1-\gamma}dx$ is not finite for $u \in X_0$. Therefore, the energy functional $I$ fails to be continuous and we cannot use the usual variational technique to guarantee the existence of solution. We will use arguments from \cite{sun2013} to obtain a weak solution to the problem \eqref{problem reduced}. Similar type of results can also be found in \cite{zhang2016}. We now state and prove the following Lemma to guarantee a finite range for $\Lambda$,which is defined as
		$$\Lambda=\inf\{\lambda>0: ~\text{The problem \eqref{problem main} has no solution}\}.$$
	\begin{lemma}\label{lambda finite}
		Assume $a, b, \gamma>0$, ($A1$)-($A3$) and \eqref{compatibility} holds. Then $0\leq\Lambda<\infty$.
	\end{lemma}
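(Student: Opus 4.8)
The plan is to show that $\Lambda<\infty$ by producing an explicit $\lambda^*$ for which the problem \eqref{problem main} admits no weak solution, and to observe that $\Lambda\ge 0$ is trivial by definition. The guiding principle is that the singular term $\lambda h(x)u^{-\gamma}$ forces a positive solution $u$ to be bounded below on $\Omega$ (at least in an integrated sense), and this lower bound is incompatible with the a priori upper bound furnished by the Kirchhoff operator once $\lambda$ is large. Concretely, I would first test the weak formulation \eqref{weak formulation} with a convenient fixed function. A natural choice is the torsion-type function $e\in X_0$ solving $(-\Delta)^s e = 1$ in $\Omega$, $e=0$ outside, which is strictly positive in $\Omega$; alternatively one may simply test with $u$ itself. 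Using $\psi = e$ (or $\psi=u$) and dropping the nonnegative terms $\int_\Omega \phi_u u\psi\ge 0$ and, when the sign is favorable, the $f$-term via (A2)–(A3) near zero, one obtains an inequality of the shape
\begin{align}\label{eq:apriori}
(a+b\|u\|^2)\,\langle u,e\rangle \;\ge\; \lambda\int_\Omega h(x)u^{-\gamma}e\,dx,
\end{align}
where $\langle\cdot,\cdot\rangle$ is the bilinear form associated with $(-\Delta)^s$.

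Next I would extract from \eqref{eq:apriori} a contradiction for large $\lambda$. Since $\langle u,e\rangle = \int_\Omega u\,dx$ (by definition of $e$), the left side is controlled by $\|u\|$ through the continuous embedding $X_0\hookrightarrow L^1(\Omega)$, giving $(a+b\|u\|^2)\|u\|\cdot C \ge \lambda\int_\Omega h u^{-\gamma}e$. On the other hand, the right side must be bounded below independently of $u$: this is where the compatibility-type positivity of $h$ enters. One shows, using the singular Euler–Lagrange equation together with a sweeping/comparison argument (comparing $u$ with a small multiple of $e$, or with the solution of the purely singular problem), that any weak solution satisfies a uniform pointwise bound $u(x)\ge c_0\, e(x)^{\theta}$ for suitable $c_0,\theta>0$ depending only on the data; hence $\int_\Omega h u^{-\gamma} e \ge c_1 > 0$ with $c_1$ independent of $\lambda$. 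Actually a cleaner route avoids any lower bound on $u$: multiply the equation by $u$, so that $\int_\Omega h u^{1-\gamma}$ appears on the right; by Hölder and $X_0\hookrightarrow L^{2_s^*}$ the left side is $\lesssim \|u\|^2 + b\|u\|^4$, while $\int_\Omega h u^{1-\gamma}$ is handled by splitting $\{u\le 1\}$ and $\{u>1\}$ and using \eqref{compatibility} plus the embedding; one then needs the reverse estimate $\int_\Omega h u^{1-\gamma}\ge \kappa(\lambda)$ with $\kappa(\lambda)\to\infty$, which follows because scaling considerations show $\|u\|$ cannot stay small when $\lambda$ is large (the singular term dominates at the origin). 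Combining the two bounds yields a polynomial inequality in $\|u\|$ with a coefficient growing in $\lambda$, which fails once $\lambda\ge\lambda^*$; therefore no solution exists for such $\lambda$ and $\Lambda\le\lambda^*<\infty$.

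For the lower bound, $\Lambda\ge 0$ is immediate: $\Lambda$ is an infimum of a subset of $(0,\infty)$, so it is automatically nonnegative. (If one additionally wants $\Lambda>0$, that is exactly the content of the existence theorems, showing the set of "bad" $\lambda$ is bounded away from $0$, but for this Lemma only $0\le\Lambda$ is asserted.)

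I expect the main obstacle to be making the "reverse" estimate rigorous — i.e., showing that the right-hand singular integral cannot be too small, or equivalently that $\|u\|$ must be large when $\lambda$ is large. The subtlety is that a hypothetical solution could a priori concentrate or degenerate, and the non-$C^1$ nature of the functional (for $\gamma>0$ general, and genuinely for $\gamma>1$) means one cannot simply invoke critical-point identities; everything must be done at the level of the weak formulation in Definition \ref{weak solution defn}, using the admissible test functions and the properties of $\phi_u$ from Lemma \ref{phi prop}(i)–(ii) to discard the Schrödinger–Poisson term with the correct sign. Handling the strong-singularity regime uniformly with the weak one may require testing against a truncation $\min\{u,n\}$ or against $e$ rather than $u$, to keep all integrals finite.
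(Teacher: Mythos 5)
Your overall strategy --- test the weak formulation \eqref{weak formulation} against a fixed positive function and try to rule out solutions once $\lambda$ is large --- is the same in spirit as the paper's, which tests with the first eigenfunction $\phi_1$ of $(-\Delta)^s$ rather than the torsion function. But as written your argument does not close, and the gap is essentially the one you flag yourself. First, the inequality $(a+b\|u\|^2)\langle u,e\rangle\ge\lambda\int_\Omega h(x)u^{-\gamma}e\,dx$ is not obtained by ``dropping nonnegative terms'': in the identity coming from $\psi=e$ the Poisson term $\int_\Omega\phi_u u e\ge0$ sits on the \emph{same} side as the Kirchhoff term, so discarding it gives the reverse inequality, and you would in addition need to control $\int_\Omega f(x,u)e$, which (A1)--(A3) (purely local hypotheses near $t=0$, no global sign or growth) do not allow. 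Second, and more fundamentally, even granting such an inequality, both sides depend on the unknown solution: a hypothetical solution with $\|u\|$ large makes the left side large and the singular integral small, so no contradiction follows for any fixed $\lambda$ unless you also prove either a uniform lower barrier $u\ge c_0e^{\theta}$ or the ``reverse estimate'' $\int_\Omega h u^{1-\gamma}\ge\kappa(\lambda)\to\infty$ \emph{together with} an a priori upper bound on $\|u\|$. None of these is established: the sweeping/comparison argument you invoke is not available off the shelf for \eqref{problem reduced} (the Kirchhoff coefficient $a+b\|u\|^2$ depends on $u$, the equation is coupled to $\phi_u$, and no comparison principle is proved in the paper), and the ``scaling considerations'' are only asserted.

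The paper's proof avoids any a priori estimate on $u$ precisely because it cancels the $u$-dependence pointwise. Testing with $\phi_1$ yields
\begin{equation*}
\lambda_1\int_{\Omega}(a+b\|u\|^2)u\phi_1\,dx=\int_{\Omega}\bigl(\lambda h(x)u^{-\gamma}+f(x,u)-\phi_u u\bigr)\phi_1\,dx,
\end{equation*}
and the contradiction is drawn from a pointwise-in-$t$ domination: one chooses $\tilde{\Lambda}$ so large that $\tilde{\Lambda}h(x)t^{-\gamma}+f(x,t)>2\lambda_1 t(a+bt^2)+\phi_t t$ for all $t>0$, so the right-hand side strictly exceeds the eigenvalue term regardless of the size or shape of $u$. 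This pointwise comparison of the nonlinearity with $t\mapsto\lambda_1 t(a+bt^2)$ is exactly the mechanism your proposal is missing; to salvage your route you would need to prove such a pointwise (or at least measure-theoretic) domination, or a genuine lower barrier for solutions, neither of which is supplied. The observation that $\Lambda\ge0$ is indeed trivial and matches the paper.
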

	\begin{proof}
		By definition, $\Lambda\geq 0$. Let $\phi_1>0$ be the first eigenfunction \cite{brasco2016second} corresponding to the first eigenvalue $\lambda_1$ for the fractional Laplacian operator. Then we have
		\begin{align}
		\begin{split}
		(-\Delta)^s\phi_1&=\lambda_1\phi_1~\text{in}~\Omega\\
		\phi_1&>0~\text{in}~\Omega\\
		\phi_1&>0~\text{in}~\mathbb{R}^N\setminus\Omega.
		\end{split}
		\end{align}
		Therefore, by putting $\phi_1$ as the test function in Definition \ref{weak solution defn}, we obtain
		\begin{align}\label{lambda invalid}
		\begin{split}
		\lambda_1\int_{\Omega}(a+b\|u\|^2)u\phi_1dx&=\int_{\Omega}(a+b\|u\|^2)(-\Delta)^s\phi_1udx\\
		&=\int_{\Omega}\left(\lambda h(x)u^{-\gamma}+f(x,u)-\phi_uu \right)\phi_1dx
		\end{split}
		\end{align}
		At this stage, we choose $\tilde{\Lambda}>0$ such that
		$$\tilde{\Lambda}h(x_0)t^{-\gamma}+f(x_0,t)>2\lambda_1t(a+bt^2)+\phi_tt$$ for all $t>0$ and for some $x_0\in\Omega$, which gives a contradiction to \eqref{lambda invalid}. Hence $\Lambda<\infty$.
	\end{proof}
	\noindent In the subsequent two sections, we establish the existence of solution(s).	
	\section{Existence of infinitely many solutions for $0<\gamma<1$.}	
	\noindent We begin this section with the definition of genus of a set.
	\begin{definition}\label{genus} {(\bf{Genus})}
		Let $X$ be a Banach space and $A\subset X$. A set $A$ is said to be symmetric if $u\in A$ implies $(-u)\in A$. Let $A$ be a closed, symmetric subset of $X$ such that $0\notin A$. We define a genus $\gamma(A)$ of $A$ by the smallest integer $k$ such that there exists an odd continuous mapping from $A$ to $\mathbb{R}^{k}\setminus\{0\}$. We define $\gamma(A)=\infty$, if no such $k$ exists.
	\end{definition}
	\noindent We now define the following family of sets, $$\Gamma_n=\{A_n\subset X: A_n~\text{is closed, symmetric and}~ 0\notin A_n~\text{such that the genus}~ \gamma(A_n)\geq n\}.$$
	Further, we will use the following version of the symmetric Mountain Pass Theorem from \cite{kajikiya2005critical}.
	\begin{theorem}\label{sym mountain}
		Let $X$ be an infinite dimensional Banach space and $\tilde{I}\in C^1(X,\mathbb{R})$ satisfies the following
		\begin{itemize}
			\item[(i)] $\tilde{I}$ is even, bounded below, $\tilde{I}(0)=0$ and $\tilde{I}$ satifies the $(PS)_c$ condition.
			\item[(ii)] For each $n\in\mathbb{N}$, there exists an $A_n\in\Gamma_n$ such that $\sup\limits_{u\in A_n}\tilde{I}(u)<0.$ 
		\end{itemize}
		Then for each $n\in\mathbb{N}$, $c_n=\inf\limits_{A\in \Gamma_n}\sup\limits_{u\in A}\tilde{I}(u)<0$ is a critical value of $\tilde{I}.$
	\end{theorem}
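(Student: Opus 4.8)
The plan is to run the standard genus-based minimax scheme, combining the properties of the Krasnoselskii genus $\gamma$ with an odd (equivariant) deformation lemma. Throughout write $\tilde I^{c}=\{u\in X:\tilde I(u)\le c\}$ for the sublevel sets and $K_c=\{u\in X:\tilde I(u)=c,\ \tilde I'(u)=0\}$ for the critical set at level $c$.

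First I would record the elementary properties of the numbers $c_n$. Since a set of genus at least $n+1$ automatically has genus at least $n$, we have $\Gamma_{n+1}\subseteq\Gamma_n$, hence $c_n\le c_{n+1}$, so $(c_n)$ is nondecreasing. Because $\tilde I$ is bounded below we get $c_n>-\infty$, while hypothesis (ii) furnishes, for each $n$, a set $A_n\in\Gamma_n$ with $\sup_{A_n}\tilde I<0$, whence $c_n\le\sup_{A_n}\tilde I<0$. Thus $-\infty<c_n<0$ for every $n$; in particular each $\Gamma_n$ is nonempty and $c_n$ is a well-defined real number.

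The heart of the argument is an equivariant deformation lemma, valid because $\tilde I\in C^1(X,\mathbb R)$ is even and satisfies $(PS)_c$. I would first pass from the derivative to a pseudo-gradient vector field $V$ for $\tilde I$ on the regular set (necessary since $X$ is only Banach), and then symmetrize it by setting $\tilde V(u)=\tfrac12\bigl(V(u)-V(-u)\bigr)$; evenness of $\tilde I$ gives $\tilde I'(-u)=-\tilde I'(u)$, so $\tilde V$ is again a pseudo-gradient and is odd, and the flow of a locally Lipschitz, cut-off, renormalised version of $-\tilde V$ is an odd continuous deformation. The $(PS)_c$ condition guarantees that, when $K_c=\emptyset$, the norm $\|\tilde I'\|$ is bounded below by a positive constant on a thin strip around the level $c$; this uniform lower bound is exactly what lets the flow decrease $\tilde I$ by a fixed amount. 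Concretely, if $K_c=\emptyset$ the lemma yields some $\varepsilon>0$ and an odd homeomorphism $\eta:=\eta(1,\cdot):X\to X$ with $\eta\bigl(\tilde I^{c+\varepsilon}\bigr)\subset\tilde I^{c-\varepsilon}$.

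With this tool the conclusion follows by contradiction. Fix $n$ and suppose $c:=c_n$ were a regular value, i.e.\ $K_{c}=\emptyset$; take the associated $\varepsilon>0$ and odd homeomorphism $\eta$, shrinking $\varepsilon$ so that $c+\varepsilon<0$. By the definition of $c_n$ as an infimum there is a set $A\in\Gamma_n$ with $\sup_{A}\tilde I<c+\varepsilon$, that is $A\subset\tilde I^{c+\varepsilon}$, so that $\eta(A)\subset\tilde I^{c-\varepsilon}$. Since $\eta$ is an odd homeomorphism, $\eta(A)$ is closed and symmetric, and the supervariance of the genus under odd continuous maps gives $\gamma(\eta(A))\ge\gamma(A)\ge n$. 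Moreover $0\notin\eta(A)$, because $\tilde I(0)=0>c-\varepsilon$ forces $0\notin\tilde I^{c-\varepsilon}\supset\eta(A)$. Hence $\eta(A)\in\Gamma_n$, yet $\sup_{\eta(A)}\tilde I\le c-\varepsilon<c=c_n$, contradicting the definition of $c_n$. Therefore $K_{c_n}\neq\emptyset$ and $c_n$ is a critical value. The main obstacle is precisely the construction of the odd deformation in the purely Banach setting: one cannot use a gradient flow directly and must build an equivariant pseudo-gradient field and verify that its suitably truncated flow is both odd and contracts the sublevel sets by a definite amount uniformly away from $K_c$. Once this lemma is in hand, the genus bookkeeping — the inclusion $\Gamma_{n+1}\subseteq\Gamma_n$, invariance of $\gamma$ under odd homeomorphisms, and the fact that $0$ stays outside the deformed set because $c_n<0=\tilde I(0)$ — is routine.
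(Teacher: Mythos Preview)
The paper does not supply a proof of this theorem at all: it is quoted verbatim as a tool from Kajikiya \cite{kajikiya2005critical} (see the sentence immediately preceding the statement), so there is no ``paper's own proof'' to compare against. Your argument is the standard genus--minimax scheme combined with an equivariant deformation lemma, which is precisely the route taken in Kajikiya's original paper (and, before that, in Clark's and Rabinowitz's work); it is correct as written. Two minor remarks: the shrinking of $\varepsilon$ so that $c+\varepsilon<0$ is not actually needed for the step where you use it, since $c_n<0$ already forces $c-\varepsilon<0=\tilde I(0)$ and hence $0\notin\tilde I^{c-\varepsilon}$; and what you call ``supervariance'' of the genus is usually stated as the mapping property $\gamma(A)\le\gamma(\overline{f(A)})$ for odd continuous $f$, which here gives equality because $\eta$ is an odd homeomorphism.
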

	
	\noindent We will modify the problem \eqref{problem reduced} to apply the symmetric Mountain Pass Theorem as follow
	\begin{align}\label{main2}
\begin{split}
\left(a+b\int_{Q}\frac{(u(x)-u(y))^2}{|x-y|^{N+2s}}\right)(-\Delta)^{s} u+\phi_u u&=\lambda h(x)sign(u)|u|^{-\gamma}+f(x,u)~\text{in}~\Omega,\\
u&=0~\text{in}~\mathbb{R}^N\setminus\Omega,
\end{split}
	\end{align}
	
	\noindent The associated energy functional to the problem \eqref{main2} is defined as
	\begin{align}\label{energy modified}
	J(u)=\frac{a}{2}\|u\|^{2}+\frac{b}{4}\|u\|^{4}&+\frac{1}{4}\int_{\Omega}\phi_{u}u^{2}-\frac{\lambda}{1-\gamma}\int_{\Omega}h(x)|u|^{1-\gamma}-\int_{\Omega}F(x,u),~ u\in X_0,
	\end{align}
	where $F(x,u)=\int_{0}^{|u|}f(x,t)dt$. Observe that the functional $J$ is even by using the assumption ($A1$) and Lemma \ref{phi prop}(iii). We now define a weak solution to the modified problem \eqref{main2}.
	\begin{definition}\label{weak modified}
		A function $u\in X_0$ is a weak solution of \eqref{main2}, if $\phi |u|^{-\gamma}\in L^1(\Omega)$ and
	{\small \begin{align}
		&(a+b[u]^{2})\int_{Q}(-\Delta)^{s/2} u\cdot(-\Delta)^{s/2}\psi+\int_{\Omega}\phi_{u}u\psi -\int_{\Omega}\left(\lambda h(x)sign(u)|u|^{-\gamma}+f(x,u)\right)\psi=0, 
		\end{align}}
	\end{definition}
	\noindent for every $\psi\in X_0.$ Observe that if $u>0$ a.e. in $\Omega$, then weak solutions to the problem \eqref{main2} and to the problem \eqref{problem reduced} coincide. Therefore, it is sufficient to obtain a sequence of nonnegative weak solutions to the problem \eqref{problem reduced}. We now extend and modify $f(x,u)$ for $u$ outside a neighbourhood of $0$ by $\tilde{f}(x, u)$ as follow. We will follow \cite{clark1972variant} by considering a cut-off problem. Choose $l>0$ sufficiently small such that $0<l\leq\frac{1}{2}\min\{\delta, r\}$, where $\delta$ and $r$ are same as in the assumptions on $f$. We now define a $C^1$ function $\xi:\mathbb{R}\rightarrow\mathbb{R}^+$ such that $0\leq\xi(t)\leq1$ and 
	$$\xi(t)=\begin{cases}
	1, ~\text{if}~ |t|\leq l\\
	\xi ~\text{is decreassing, if}~ l\leq t\leq 2l\\
	0,~\text{if}~ |t|\geq 2l.
	\end{cases}$$
	We now consider the following cut-off problem by defining $\tilde{f}(x, u)=f(x, u)\xi(u)$.
	
	\begin{align}\label{main3}
\begin{split}
\left(a+b\int_{Q}\frac{(u(x)-u(y))^2}{|x-y|^{N+2s}}\right)(-\Delta)^{s} u+\phi_u u&=\lambda h(x)sign(u)|u|^{-\gamma}+\tilde{f}(x,u)~\text{in}~\Omega,\\
u&=0~\text{in}~\mathbb{R}^N\setminus\Omega,
\end{split}
	\end{align}
	\noindent The associated energy functional to the problem \eqref{main3} is defined as
	\begin{align}\label{energy cutoff}
	\tilde{I}(u)=\frac{a}{2}\|u\|^{2}+\frac{b}{4}\|u\|^{4}&+\frac{1}{4}\int_{\Omega}\phi_{u}u^{2}-\frac{\lambda}{1-\gamma}\int_{\Omega}h(x)|u|^{1-\gamma}-\int_{\Omega}\tilde{F}(x, u)dx, ~ u\in X_0.
	\end{align}	
	\noindent We define a weak solution to the problem \eqref{main3} as follows.
	\begin{definition}\label{weak cutoff}
		A function $u\in X_0$ is a weak solution of \eqref{main3}, if $\phi |u|^{-\gamma}\in L^1(\Omega)$ and
	{\small \begin{align}
		&(a+b[u]^{2})\int_{Q}(-\Delta)^{s/2} u\cdot(-\Delta)^{s/2}\psi+\int_{\Omega}\phi_{u}u\psi -\int_{\Omega}(\lambda h(x)sign(u)|u|^{-\gamma}+\tilde{f}(x,u))\psi=0
		\end{align}}
	\end{definition}
	\noindent for every $\psi\in X_0$. Again, if  $\|u\|_{\infty}\leq l$ holds, then the weak solutions of \eqref{main3} and the weak solutions of \eqref{main2} coincide. We establish the existence result for the problem \eqref{main3}. Finally, we prove our main theorem by showing that the solutions to \eqref{main3} are positive and  $\|u\|_{\infty}\leq l$.

	\noindent We first prove the following Lemmas which are the hypotheses to the Symmetric mountain pass theorem.
	\begin{lemma}\label{lemma ps}
		The functional $\tilde{I}$ is bounded from below and satisfies $(PS)_c$ condition.
	\end{lemma}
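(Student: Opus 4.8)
The plan is to prove the two assertions separately, both reducing to elementary estimates of the five terms of $\tilde I$ together with the properties of $\phi_u$ from Lemma \ref{phi prop}. For boundedness from below I would use that $\int_\Omega\phi_u u^2\,dx\ge0$ by Lemma \ref{phi prop}(ii); that $\tilde F$ is bounded on $\overline\Omega\times\mathbb R$, since $\tilde f(x,t)=f(x,t)\xi(t)$ vanishes for $|t|\ge2l$ and $f$ is continuous, hence bounded, on the compact set $\overline\Omega\times[-2l,2l]$, so that $\bigl|\int_\Omega\tilde F(x,u)\,dx\bigr|\le C_0|\Omega|$; and that, by H\"older's inequality and the embedding $X_0\hookrightarrow L^{2_s^*}(\Omega)$, $\int_\Omega h(x)|u|^{1-\gamma}\,dx\le C\|u\|^{1-\gamma}$. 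Together these give
\[
\tilde I(u)\ \ge\ \frac a2\|u\|^2+\frac b4\|u\|^4-\frac{C\lambda}{1-\gamma}\|u\|^{1-\gamma}-C_0|\Omega|,
\]
and since $a+b>0$ and $0<1-\gamma<1$, the right-hand side is bounded below and tends to $+\infty$ as $\|u\|\to\infty$; hence $\tilde I$ is bounded below and coercive on $X_0$.

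For the $(PS)_c$ condition, take $\{u_n\}\subset X_0$ with $\tilde I(u_n)\to c$ and $\tilde I'(u_n)\to0$ in $X_0^\ast$. Coercivity forces $\{u_n\}$ to be bounded, so along a subsequence $u_n\rightharpoonup u$ in $X_0$, $u_n\to u$ in $L^q(\Omega)$ for every $q\in[1,2_s^*)$, and $u_n\to u$ a.e. in $\Omega$. I would then test $\tilde I'(u_n)$ with $u_n-u$, obtaining
\[
(a+b\|u_n\|^2)\langle u_n,u_n-u\rangle+\int_\Omega\phi_{u_n}u_n(u_n-u)-\lambda\int_\Omega h|u_n|^{-\gamma}\operatorname{sign}(u_n)(u_n-u)-\int_\Omega\tilde f(x,u_n)(u_n-u)=o(1),
\]
where $\langle\cdot,\cdot\rangle$ is the inner product of $X_0$. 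The $\tilde f$-integral is $o(1)$ because $\tilde f$ is bounded and $\|u_n-u\|_{L^1}\to0$; the Schr\"odinger--Poisson integral satisfies $\int_\Omega\phi_{u_n}u_n(u_n-u)\,dx\ge\int_\Omega\phi_u u(u_n-u)\,dx=o(1)$ by the monotonicity inequality of Lemma \ref{phi prop}(v) together with $u_n-u\rightharpoonup0$ (and Lemma \ref{phi prop}(iv)); and the singular integral is $o(1)$ by H\"older's inequality, the a.e. convergence and an equi-integrability (Vitali) argument, bounding $|u_n|^{-\gamma}|u_n-u|\le|u_n|^{1-\gamma}+|u_n|^{-\gamma}|u|$ and using the uniform bound on $\int_\Omega h|u_n|^{1-\gamma}$. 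Consequently $\limsup_n(a+b\|u_n\|^2)\langle u_n,u_n-u\rangle\le0$. Writing $\langle u_n,u_n-u\rangle=\|u_n\|^2-\|u\|^2+o(1)$ (using $u_n\rightharpoonup u$) and passing to a subsequence with $\|u_n\|^2\to t\ge\|u\|^2$, this gives $(a+bt)(t-\|u\|^2)=0$: if $a>0$, or if $a=0$ with $t>0$, then $t=\|u\|^2$; and if $a=0$ with $t=0$ then $u_n\to0=u$ already. In either case $\|u_n\|\to\|u\|$, so that $u_n\rightharpoonup u$ and $\|u_n\|\to\|u\|$ in the Hilbert space $X_0$ force $u_n\to u$ strongly, which verifies $(PS)_c$.

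The step I expect to be the main obstacle is the simultaneous control of the three nonlocal/singular terms when testing with $u_n-u$. The Kirchhoff coefficient $a+b\|u_n\|^2$ prevents a direct appeal to the $S_+$-property of $(-\Delta)^s$ and forces the case analysis on $t=\lim\|u_n\|^2$, with the degenerate case $a=0$ (in particular $t=0$) needing separate attention; the Schr\"odinger--Poisson term has to be handled with only weak convergence of $\{u_n\}$ at hand, which is precisely what the monotonicity inequality in Lemma \ref{phi prop}(v) is tailored for; and, most delicate of all, the singular term $h|u_n|^{-\gamma}\operatorname{sign}(u_n)(u_n-u)$ has an integrand that is not pointwise dominated in any obvious way, so that showing it is $o(1)$ requires a careful equi-integrability estimate exploiting $0<\gamma<1$ and the a.e. convergence.
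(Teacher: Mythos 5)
Your proposal is correct in outline and follows the same two-step strategy as the paper (coercivity of $\tilde I$ from the H\"older estimate $\int_\Omega h|u|^{1-\gamma}\le C\|u\|^{1-\gamma}$ plus boundedness of $\tilde F$, then the $(PS)_c$ condition by pairing the derivative with $u_n-u$ and disposing of the Poisson, singular and $\tilde f$ terms), but it diverges from the paper in the concluding step and in two technical choices, and the differences are worth recording. The paper tests $\langle\tilde I'(u_n)-\tilde I'(u),u_n-u\rangle$ and uses the elementary Kirchhoff monotonicity estimate $(a+b[u_n]^2)\langle u_n,u_n-u\rangle-(a+b[u]^2)\langle u,u_n-u\rangle\ge a[u_n-u]^2+o(1)$ to land directly on $\min\{a,1\}\|u_n-u\|^2\le o(1)$; this is shorter but genuinely needs $a>0$ (as does its coercivity line ``since $a,b>0$''), so the degenerate case $a=0$, $b>0$ admitted in Theorem \ref{main thm1} is not covered by that argument. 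Your route -- deducing $\limsup_n(a+b\|u_n\|^2)\langle u_n,u_n-u\rangle\le 0$, writing $\langle u_n,u_n-u\rangle=\|u_n\|^2-\|u\|^2+o(1)$, and then invoking norm convergence plus weak convergence in the Hilbert space $X_0$, with the case analysis on $t=\lim\|u_n\|^2$ -- buys exactly that missing degenerate case at the cost of a slightly longer limit argument. You also treat the Poisson term through the monotonicity inequality of Lemma \ref{phi prop}(v) rather than the convergence statement (iv) used in the paper; both suffice, and yours needs only one-sided control. Finally, be aware that the step you yourself flag as delicate, the singular pairing $\int_\Omega h\,\mathrm{sign}(u_n)|u_n|^{-\gamma}(u_n-u)$, is left as a sketch in your proposal, but the paper is no more complete there: it only establishes $\int_\Omega u_n^{1-\gamma}\to\int_\Omega u^{1-\gamma}$, which is not literally the paired term appearing in $\langle\tilde I'(u_n)-\tilde I'(u),u_n-u\rangle$, so on this point your attempt and the paper stand at a comparable level of rigor and the equi-integrability argument you indicate would in fact be needed to make either version airtight.
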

	\begin{proof}
		By the definition of $\xi$ and using the H\"{o}lder's inequality, we get 
		\begin{align*}
		\tilde{I}(u)&\geq\frac{a}{2}\|u\|^{2}+\frac{b}{4}\|u\|^{4}+\frac{1}{4}\int_{\Omega}\phi_{u}u^{2}-C\|u\|^{1-\gamma}-C_1\\
		&\geq\frac{a}{2}\|u\|^{2}+\frac{b}{4}\|u\|^{4}- C\|u\|^{1-\gamma}-C_1
		\end{align*}
		where, $C$, $C_1$ are nonnegative constants. Since $a, b>0$, this implies that $\tilde{I}$ is coercive and bounded from below in $X_0$. Let $\{u_n\}\subset X_0$ be a Palais Smale sequence for the functional $\tilde{I}$. Therefore, by using the coerciveness property of  $\tilde{I}$ we have $\{u_n\}$ is bounded in $X_0$. Thus, we may assume that $\{u_n\}$ has a subsequence (still denoted by $\{u_n\}$) such that $u_n\rightharpoonup u$ in $X_0$. Therefore, we have
		\begin{equation}\label{convergence weak}
		\int_{Q}(-\Delta)^{s/2} u_n\cdot(-\Delta)^{s/2}\psi dxdy\longrightarrow\int_{Q}(-\Delta)^{s/2} u\cdot(-\Delta)^{s/2}\psi dxdy
		\end{equation} 
		for all $\phi\in X_0.$
		By the embedding result \cite{servadei2012mountain}, we can assume for every $q\in[1, 2_s^*)$
		\begin{align}
		u_n&\longrightarrow u ~\text{in}~ L^q(\Omega),\label{embed strong}\\
		u_n(x)&\longrightarrow u(x) ~\text{a.e.}~ L^q(\Omega).\label{embed pointwise}
		\end{align}
		Therefore, from Lemma A.1 \cite{willem1997minimax}, we get that there exists $g\in L^q(\Omega)$ such that
		\begin{equation}\label{appendeix A1}
		|u_n(x)|\leq g(x) ~\text{a.e. in}~ \Omega, \forall\,n\in\mathbb{N}.
		\end{equation}
		Now on using \eqref{embed strong}, \eqref{embed pointwise}, \eqref{appendeix A1} and applying the Lebesgue dominated convergence theorem, we obtain
		\begin{equation}\label{convergence f tilla}
		\int_{\Omega}\tilde{f}(x,u_n)udx\rightarrow\int_{\Omega}\tilde{f}(x,u)udx ~\text{and}~ \int_{\Omega}\tilde{f}(x,u_n)u_ndx\rightarrow\int_{\Omega}\tilde{f}(x,u)udx.
		\end{equation}
		Moreover, 
		\begin{equation}\label{convergence phi}
		\int_{\Omega}\phi_{u_n}u_nudx\rightarrow\int_{\Omega}\phi_{u}u^2dx ~\text{and}~ \int_{\Omega}\phi_{u_n}u_n^2dx\rightarrow\int_{\Omega}\phi_{u}u^2dx.
		\end{equation}		
		Again, on using the H\"{o}lder's inequality and passing the limit $n\rightarrow\infty$, we get
		\begin{align}
		\begin{split}
		\int_{\Omega}u_n^{1-\gamma}dx&\leq\int_{\Omega}u^{1-\gamma}dx+\int_{\Omega}|u_n-u|^{1-\gamma}dx\\
		&\leq\int_{\Omega}u^{1-\gamma}dx+C\|u_n-u\|_{L^2(\Omega)}^{1-\gamma}\\
		&=\int_{\Omega}u^{1-\gamma}dx +o(1).
		\end{split}
		\end{align}
		Similarly, we have
		\begin{align}
		\begin{split}
		\int_{\Omega}u^{1-\gamma}dx&\leq\int_{\Omega}u_n^{1-\gamma}dx+\int_{\Omega}|u_n-u|^{1-\gamma}dx\\
		&\leq\int_{\Omega}u_n^{1-\gamma}dx+C\|u_n-u\|_{L^2(\Omega)}^{1-\gamma}\\
		&=\int_{\Omega}u_n^{1-\gamma}dx +o(1).
		\end{split}
		\end{align}
		Therefore,
		\begin{equation}\label{convergence singular}
		\int_{\Omega}u_n^{1-\gamma}dx=\int_{\Omega}u^{1-\gamma}dx+o(1).
		\end{equation}
		Since, $\{u_n\}$ is a Palais Smale sequence of $\tilde{I}$ therefore, by weak convergence, we gave
		\begin{equation}\label{ps seq}
		\langle\tilde{I}'(u_n)-\tilde{I}'(u), u_n-u\rangle=o(1)~\text{as}~n\rightarrow\infty.
		\end{equation}
		On the other hand,
		\begin{align}
		\langle\tilde{I}'(u_n)-\tilde{I}'(u), (u_n-u)\rangle&= (a+b[u_n]^{2})\langle u_n, (u_n-u)\rangle-(a+b[u]^{2})\langle u, (u_n-u)\rangle\nonumber\\
		&+\int_{\Omega}[(\phi_{u_n}u_n-\phi_{u}u) -\lambda h(x)(sign(u_n)|u_n|^{-\gamma}- sign(u)|u|^{-\gamma})](u_n-u)\nonumber\\
		&-\int_{\Omega}(\tilde{f}(x,u_n)-\tilde{f}(x,u))(u_n-u)
		\end{align}
		Now, on using \eqref{convergence f tilla}, \eqref{convergence phi} and \eqref{convergence singular} we get
		\begin{align}\label{ps seq1}
		&\langle\tilde{I}'(u_n)-\tilde{I}'(u), (u_n-u)\rangle= (a+b[u_n]^{2})\langle u_n, (u_n-u)\rangle-(a+b[u]^{2})\langle u, (u_n-u)\rangle+o(1)
		\end{align}
		as $n\rightarrow\infty$. Observe that
		\begin{align}\label{ps seq2}
		&(a+b[u_n]^{2})\langle u_n, (u_n-u)\rangle-(a+b[u]^{2})\langle u, (u_n-u)\rangle\nonumber\\
		&=(a+b[u_n]^{2})[u_n-u]^2+b([u_n]^{2}-[u]^{2})\langle u, (u_n-u)\rangle.
		\end{align}
		Since, the sequence $(a+b[u_n]^{2})$ is bounded in $X_0$. Thus by using the definition of weak convergence, we get
		\begin{align}\label{ps seq3}
		b([u_n]^{2}-[u]^{2})\langle u, (u_n-u)\rangle=o(1)~\text{as}~n\rightarrow\infty.
		\end{align}
		Therefore, from \eqref{ps seq2} and \eqref{ps seq3}, we obtain
		\begin{align}\label{ps seq4}
		&(a+b[u_n]^{2})\langle u_n, (u_n-u)\rangle-(a+b[u]^{2})\langle u, (u_n-u)\rangle\geq a[u_n-u]^2~\text{as}~n\rightarrow\infty.
		\end{align}
		Finally, on using \eqref{ps seq}, \eqref{ps seq1} and \eqref{ps seq4}, we conclude that
		\begin{align}
		o(1)\geq\min\{a,1\}\|u_n-u\|^2+o(1)~\text{as}~n\rightarrow\infty.
		\end{align}
		Hence, $u_n\rightarrow u$ strongly in $X_0$ and this completes the proof.
	\end{proof}
	
	\begin{lemma}\label{lemma genus}
		For any $n\in\mathbb{N}$, there exists a closed, symmetric subset $A_n\subset X_0$ with $0\notin A_n$ such that the genus $\gamma(A_n)\geq n$ and $\sup\limits_{u\in A_n}\tilde{I}(u)<0.$
	\end{lemma}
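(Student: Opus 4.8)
The plan is the classical device for producing small negative sublevel sets of large genus: inside an arbitrary $n$-dimensional subspace of $X_0$ one takes the sphere of a sufficiently small radius $\rho_n$, and one checks that on such a sphere the singular term $\tfrac{\lambda}{1-\gamma}\int_\Omega h|u|^{1-\gamma}$, which is of order $\rho_n^{1-\gamma}$, dominates the remaining terms, which are of order $\rho_n^2$ or higher. Fix $n\in\mathbb N$ and let $E_n\subset X_0$ be any $n$-dimensional linear subspace, for instance one spanned by $n$ linearly independent functions of $C_c^\infty(\Omega)$. On $E_n$ all norms are equivalent; in particular there is $M_n>0$ with $\|v\|_{L^\infty(\Omega)}\le M_n\|v\|$ for every $v\in E_n$, so each $v\in E_n$ lies in $L^\infty(\Omega)$ and, since $h\in L^1(\Omega)$, the integral $\int_\Omega h(x)|v|^{1-\gamma}\,dx$ is finite.

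The first real step is to prove that
\[
\beta_n:=\inf\Big\{\int_\Omega h(x)|v|^{1-\gamma}\,dx:\ v\in E_n,\ \|v\|=1\Big\}>0 .
\]
The map $v\mapsto\int_\Omega h(x)|v|^{1-\gamma}\,dx$ is continuous on $E_n$: if $v_k\to v$ in $E_n$ then $v_k\to v$ in $L^\infty(\Omega)$, hence $|v_k|^{1-\gamma}\to|v|^{1-\gamma}$ uniformly (the function $t\mapsto|t|^{1-\gamma}$ being uniformly continuous on the bounded range of the $v_k$), while the integrands stay dominated by a fixed multiple of $h\in L^1(\Omega)$, so dominated convergence applies. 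Since the unit sphere of $E_n$ is compact and the above functional is strictly positive on it (because $h>0$ a.e. and no unit vector of $E_n$ vanishes a.e.), the infimum is attained and positive.

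Now take $u=\rho v\in E_n$ with $\|v\|=1$ and $\rho=\|u\|\le l/M_n$, so that $\|u\|_\infty\le l$; then $\xi(u)\equiv1$, and after possibly shrinking $l$ so that $f(x,t)\ge 0$ for $x\in\Omega$ and $0\le t\le l$ (which is legitimate by (A2)), we get $\tilde f(x,s)=f(x,s)\xi(s)\ge 0$ on $[0,l]$, hence $\tilde F(x,u)=\int_0^{|u|}\tilde f(x,s)\,ds\ge 0$. Using Lemma \ref{phi prop}(i) to bound $\int_\Omega\phi_u u^2\le C_\phi\|u\|^4$, we obtain for such $u$
\[
\tilde I(u)\le\frac a2\,\rho^2+\frac{b+C_\phi}{4}\,\rho^4-\frac{\lambda}{1-\gamma}\,\beta_n\,\rho^{1-\gamma}.
\]
Since $1-\gamma<2$, the right-hand side is strictly negative for all sufficiently small $\rho$; fix such a value $\rho_n\in(0,l/M_n]$ and set $A_n:=\{u\in E_n:\|u\|=\rho_n\}$. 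Then $A_n$ is closed and symmetric, $0\notin A_n$, and $A_n$ is a sphere of the $n$-dimensional normed space $E_n$, hence odd-homeomorphic to $S^{n-1}\subset\mathbb R^n$, so $\gamma(A_n)=n\ge n$; and the displayed estimate yields $\sup_{u\in A_n}\tilde I(u)<0$, which is the assertion.

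The step I expect to demand the most care is the positivity $\beta_n>0$: this is exactly where finite-dimensionality of $E_n$ is indispensable, both to make $\int_\Omega h|v|^{1-\gamma}$ finite and continuous (via the $L^\infty$-bound on $E_n$) and to upgrade the pointwise positivity of this functional to a uniform positive lower bound through compactness of the unit sphere. Everything after that is the routine comparison of the powers $\rho^{1-\gamma}$, $\rho^2$ and $\rho^4$.
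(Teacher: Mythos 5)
Your proof is correct, but the mechanism producing negativity on the small spheres is genuinely different from the paper's. The paper works in an arbitrary finite-dimensional subspace $X_k$, uses the equivalence of $\|\cdot\|$ with the $L^2$-norm, proves the measure-theoretic claim \eqref{claim genus} (that $\int_{\{|u|>l\}}|u|^2\le\frac12\int_\Omega|u|^2$ once $\|u\|$ is small) to control the region where the cut-off kills $\tilde f$, and then gets $\tilde I(u)<0$ from the full strength of (A2): the superquadratic behaviour $F(x,t)\ge 4\left(\frac a2+\frac b4+C_\phi\right)M^2t^2$ near $t=0$ makes the term $-\int\tilde F(x,u)$ beat $\left(\frac a2+\frac b4+C_\phi\right)\|u\|^2$; the singular term is only carried along with a favourable sign. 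You instead choose the subspace inside $L^\infty$ (spanned by $C_c^\infty$ functions), scale so that $\|u\|_\infty\le l$ — which makes the cut-off inactive and lets you discard $-\int\tilde F$ after merely arranging $f\ge0$ on $[0,l]$ — and you draw the negativity from the singular term itself, via the compactness argument giving $\beta_n=\inf_{\|v\|=1,\,v\in E_n}\int_\Omega h|v|^{1-\gamma}>0$ and the comparison of $\rho^{1-\gamma}$ with $\rho^2,\rho^4$. Your route avoids the paper's claim \eqref{claim genus} entirely and uses (A2) only for a sign, at the harmless cost of restricting to subspaces of bounded functions and of leaning on $\lambda>0$, $h>0$ (both hypotheses of Theorem \ref{main thm1}); it also keeps the smallness requirement on $l$ independent of $n$, whereas the paper's choice of $l$ involves the dimension-dependent constant $M=M(k)$. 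The genus count ($A_n$ a sphere in an $n$-dimensional subspace, hence $\gamma(A_n)=n$) is the same in both arguments, so either proof feeds into Theorem \ref{sym mountain} identically.
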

	\begin{proof}
		We will first obtain the existence of a closed, symmetric subset $A_n$ of $X_0$ over every finite dimensional subspace such that $\gamma(A_n)\geq n.$ Let $X_k$ be a subspace of $X_0$ such that $\dim (X_k)=k.$ Since, every norm over a finite dimensional Banach space are equivalent then there exists a positive constant $M=M(k)$ such that $\|u\|\leq M\|u\|_{L^2(\Omega)}$ for all $u\in X_k.$\\
		{\bf Claim:}
		There exists a positive constant $R$ such that 
		\begin{equation}\label{claim genus}
		\frac{1}{2}\int_{\Omega}|u|^2dx\geq\int_{\{|u|> l\}}|u|^2dx,~\forall\,u\in X_k ~\text{such that}~ \|u\|\leq R.
		\end{equation}
		We proof it by contradiction. Let $\{u_n\}$ be a sequence in $X_k\setminus\{0\}$ such that $u_n\rightarrow0$ in $X_0$ and
		\begin{equation}\label{eq 3.24}
		\frac{1}{2}\int_{\Omega}|u_n|^2dx<\int_{\{|u_n|> l\}}|u_n|^2dx.
		\end{equation}
		Choose, $v_n=\frac{u_n}{\|u_n\|_{L^2(\Omega)}}.$ Then \eqref{eq 3.24} reduces to
		\begin{equation}\label{claim contra eqn}
		\frac{1}{2}<\int_{\{|u_n|> l\}}|v_n|^2dx.
		\end{equation}
		Since, $X_k$ is finite dimensional and $\{v_n\}$ is bounded, we can assume $v_n\rightarrow v$ in $X_0$ upto a subsequence. Therefore, $v_n\rightarrow v$ also in $L^2(\Omega).$ Further observe that, $$m\{x\in\Omega: |u_n|>l\}\rightarrow0 ~\text{as}~ n\rightarrow\infty,$$ since $u_n\rightarrow0$ in $X_0$, where $m$ refers to the Lebesgue measure. This is a contradiction to the equation \eqref{claim contra eqn}. Hence, the claim is established. Again, from the assumption $(A2)$, one can choose $0<l\leq1$ sufficiently small such that, $$\tilde{F}(x,t)=F(x,t)\geq4\left(\frac{a}{2}+\frac{b}{4}+C_{\phi}\right)M^2t^2, ~\forall\, (x,t)\in\Omega\times[0,l].$$
		Hence, for all $u\in X_k\setminus\{0\}$ such that $\|u\|\leq R$ and by using \eqref{claim genus}, we get
		\begin{align*}
		\tilde{I}(u)&\leq\frac{a}{2}\|u\|^2+\frac{b}{4}\|u\|^4+\int_{\Omega}\phi_uu^2dx-\frac{\lambda}{1-\gamma}\int_{\Omega}|h(x)||u|^{1-\gamma}dx-\int_{\{|u|\leq l\}}\tilde{F}(x, u)dx\\
		&\leq\frac{a}{2}\|u\|^2+\frac{b}{4}\|u\|^4+C_{\phi}\|u\|^4-\frac{\lambda}{1-\gamma}\int_{\Omega}|h(x)||u|^{1-\gamma}dx-4\left(\frac{a}{2}+\frac{b}{4}+C_{\phi}\right)M^2\int_{\{|u|\leq l\}}|u|^2dx\\
		&\leq\left(\frac{a}{2}+\frac{b}{4}+C_{\phi}\right)\|u\|^2-\frac{\lambda}{1-\gamma}\int_{\Omega}|h(x)||u|^{1-\gamma}dx\\
		&\hspace{4.2cm}-4\left(\frac{a}{2}+\frac{b}{4}+C_{\phi}\right)M^2\left(\int_{\Omega}|u|^2dx-\int_{\{|u|> l\}}|u|^2dx\right)
		\end{align*}
		\begin{align*}
		&\leq\left(\frac{a}{2}+\frac{b}{4}+C_{\phi}\right)\|u\|^2-\frac{\lambda}{1-\gamma}\int_{\Omega}|h(x)||u|^{1-\gamma}dx-2\left(\frac{a}{2}+\frac{b}{4}+C_{\phi}\right)M^2\int_{\Omega}|u|^2dx\\
		&\leq-\left(\frac{a}{2}+\frac{b}{4}+C_{\phi}\right)\|u\|^2-\frac{\lambda}{1-\gamma}\int_{\Omega}|h(x)||u|^{1-\gamma}dx\\
		&<0,~\text{for all}~u\in X_0~\text{such that}~\|u\|\leq\min\{1, R\}.
		\end{align*}
		\noindent We now choose, $0<\rho\leq\min\{1,R\}$ and $A_n=\{u\in X_n: \|u\|=\rho\}$. Thus $\Gamma_n\neq\phi$. This concludes that $A_n$ is symmetric, closed with $\gamma(A_n)\geq n$ such that $\sup\limits_{u\in A_n}\tilde{I}(u)<0.$
	\end{proof}
	\noindent We now state the following Lemmas which are essential to prove the boundedness of the solutions to the problem \eqref{main3}. The Lemma \ref{bounded l1} and Lemma \ref{bounded l2} are taken from \cite{brasco2016second} and a simple proof can be found in \cite{positivity}.	
	\begin{lemma}\label{bounded l1}
		Let $g:\mathbb{R}\rightarrow\mathbb{R}$ be a convex $C^1$ function. Then for every $c, d, C, D\in\mathbb{R}$ with $C, D>0$ the following inequality holds.
		\begin{equation}
		(g(c)-g(d))(C-D)\leq (c-d)(Cg'(c)-Dg'(d))
		\end{equation}
	\end{lemma}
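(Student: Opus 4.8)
The plan is to derive the inequality directly from the first-order (tangent-line) characterization of convexity, applied twice, and then take the appropriate positive combination. Since $g$ is convex and $C^1$ on $\mathbb{R}$, for all real $x,y$ one has $g(y)\geq g(x)+g'(x)(y-x)$. I would record the two instances of this that are relevant here: taking $x=d$, $y=c$ gives
\begin{equation}\label{tangent1}
g(c)-g(d)\geq g'(d)(c-d),
\end{equation}
and taking $x=c$, $y=d$ gives $g(d)-g(c)\geq g'(c)(d-c)$, i.e.
\begin{equation}\label{tangent2}
g(c)-g(d)\leq g'(c)(c-d).
\end{equation}

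Next I would simply rearrange the claimed inequality. Writing $\mathrm{RHS}-\mathrm{LHS}=(c-d)\bigl(Cg'(c)-Dg'(d)\bigr)-\bigl(g(c)-g(d)\bigr)(C-D)$ and regrouping the $C$-terms and the $D$-terms, one gets
\begin{equation}\label{regroup}
\mathrm{RHS}-\mathrm{LHS}=C\Bigl[(c-d)g'(c)-\bigl(g(c)-g(d)\bigr)\Bigr]+D\Bigl[\bigl(g(c)-g(d)\bigr)-(c-d)g'(d)\Bigr].
\end{equation}
By \eqref{tangent2} the first bracket is nonnegative, and by \eqref{tangent1} the second bracket is nonnegative; since $C,D>0$ by hypothesis, the whole expression \eqref{regroup} is $\geq 0$, which is exactly the asserted inequality.

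There is essentially no obstacle here: the only subtlety is bookkeeping the sign of $c-d$ correctly when passing between the two forms \eqref{tangent1}–\eqref{tangent2}, but since each tangent-line inequality is stated without dividing by $c-d$, the argument is valid for every ordering of $c$ and $d$ (and trivially for $c=d$). I would present it in exactly the three short steps above.
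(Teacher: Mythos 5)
Your proof is correct. The paper itself does not prove this lemma -- it is quoted from the references (Brasco--Parini, with a proof indicated in the cited work of Ghosh--Choudhuri) -- and your argument, splitting
\[
(c-d)\bigl(Cg'(c)-Dg'(d)\bigr)-\bigl(g(c)-g(d)\bigr)(C-D)
= C\bigl[(c-d)g'(c)-(g(c)-g(d))\bigr]+D\bigl[(g(c)-g(d))-(c-d)g'(d)\bigr]
\]
and applying the tangent-line inequality for convex $C^1$ functions to each bracket, is exactly the standard short proof given in those sources; the regrouping is correct, both brackets are nonnegative by convexity, and positivity of $C,D$ finishes the argument, with no issue about the sign of $c-d$ since you never divide by it.
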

	\begin{lemma}\label{bounded l2}
		Let $\tilde{h}:\mathbb{R}\rightarrow\mathbb{R}$ be an increasing function, then for $c, d, \tau\in\mathbb{R}$ with $\tau\geq 0$ we have
		\begin{equation}
		[\tilde{H}(c)-\tilde{H}(d)]^2\leq (c-d)(\tilde{h}(c)-\tilde{h}(d))
		\end{equation}
		where, $\tilde{H}(t)=\int_0^t \sqrt{\tilde{h}'(\tau)}d\tau$, for $t\in\mathbb{R}.$
	\end{lemma}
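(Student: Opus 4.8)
\noindent\emph{Proof proposal for Lemma \ref{bounded l2}.} The plan is to deduce the inequality from a single application of the Cauchy--Schwarz inequality together with the monotonicity of $\tilde h$. First I would note that both sides of the claimed estimate are unchanged under the interchange of $c$ and $d$ (the left-hand side is a square, while $(c-d)(\tilde h(c)-\tilde h(d))$ is symmetric in $c$ and $d$), and that the inequality is trivially an equality when $c=d$. Hence it suffices to treat the case $c>d$.

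For $c>d$, since $\tilde h$ is increasing one has $\tilde h'\ge 0$ wherever it is defined, so $\sqrt{\tilde h'}$ is meaningful and, by the very definition of $\tilde H$,
\[
\tilde H(c)-\tilde H(d)=\int_d^c \sqrt{\tilde h'(\tau)}\,d\tau .
\]
Next I would apply the Cauchy--Schwarz inequality (H\"{o}lder with exponents $2$ and $2$) to the functions $\sqrt{\tilde h'}$ and the constant $1$ on the interval $[d,c]$, obtaining
\[
\Bigl(\int_d^c \sqrt{\tilde h'(\tau)}\,d\tau\Bigr)^{2}\le \Bigl(\int_d^c \tilde h'(\tau)\,d\tau\Bigr)\Bigl(\int_d^c 1\,d\tau\Bigr)=(c-d)\int_d^c \tilde h'(\tau)\,d\tau .
\]
Finally, the fundamental theorem of calculus gives $\int_d^c \tilde h'(\tau)\,d\tau=\tilde h(c)-\tilde h(d)$ (and in the merely monotone setting one still has ``$\le$'', which is all that is required), so combining the last two displays yields $[\tilde H(c)-\tilde H(d)]^{2}\le (c-d)(\tilde h(c)-\tilde h(d))$, which is the assertion.

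There is essentially no hard step: the lemma is a repackaging of Cauchy--Schwarz plus monotonicity, and the main obstacle is only bookkeeping about regularity, namely ensuring that $\tilde H$ and the above integrations make sense. For this I would simply assume $\tilde h$ to be $C^1$ (or at least locally absolutely continuous with $\tilde h'\in L^1_{\mathrm{loc}}$), which is harmless because in the intended application to the boundedness of solutions of \eqref{main3} the function $\tilde h$ is an explicit smooth truncation-type map; the hypothesis ``$\tau\ge 0$'' in the statement should be read as the condition $\tilde h'\ge 0$ guaranteed by monotonicity. Since the lemma is in any case quoted from \cite{brasco2016second}, I would present only this short derivation and refer there for the statement in its general form.
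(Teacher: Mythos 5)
Your proof is correct: writing $\tilde H(c)-\tilde H(d)=\int_d^c\sqrt{\tilde h'(\tau)}\,d\tau$ and applying Cauchy--Schwarz together with $\int_d^c\tilde h'(\tau)\,d\tau\le \tilde h(c)-\tilde h(d)$ for a monotone function is exactly the standard argument. The paper itself gives no proof of this lemma (it only cites \cite{brasco2016second} and \cite{positivity}), and your derivation coincides with the one found in those references, so there is nothing further to compare.
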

	\noindent The following Lemma is based on the Moser iteration technique, which gives an uniform $L^{\infty}$ bound to the weak solutions of the problem \eqref{main3}.
	\begin{lemma}\label{bounded}
		Let $u\in X_0$ be a positive weak solution to the problem in \eqref{main3}, then $u\in L^{\infty}(\Omega).$
	\end{lemma}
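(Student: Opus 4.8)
\noindent The plan is to run a Moser iteration on $u$, using that in \eqref{main3} the Schr\"odinger--Poisson term has a favourable sign, the cut-off nonlinearity $\tilde f$ is a bounded datum (it is supported in $\{|u|\le 2l\}$), and $0<\gamma<1$ keeps the singular term subcritical.

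\emph{Step 1: test function and reduction.} Fix $M>0$, write $u_M:=\min\{u,M\}$, and for $\beta\ge 0$ insert the test function $\psi=u_M^{2\beta+1}$ into Definition \ref{weak cutoff}. This $\psi$ is admissible: it is the Lipschitz image of $u$, hence lies in $X_0$, it is bounded by $M^{2\beta+1}$, and all the integrals converge (in particular $\int_\Omega h\,u^{-\gamma}u_M^{2\beta+1}\le M^{2\beta+1-\gamma}\|h\|_{L^1(\Omega)}$, since on $\{u\le M\}$ one has $u^{-\gamma}u_M^{2\beta+1}=u^{2\beta+1-\gamma}\le M^{2\beta+1-\gamma}$, and on $\{u>M\}$ one has $u^{-\gamma}u_M^{2\beta+1}<M^{-\gamma}M^{2\beta+1}$). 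Because $\phi_u\ge 0$ and $u>0$ by Lemma \ref{phi prop}(ii), the term $\int_\Omega\phi_u u\,u_M^{2\beta+1}\,dx\ge 0$ can be discarded, and with $\kappa:=a+b\|u\|^2>0$ (positive since $a+b>0$ and $u\not\equiv 0$) this leaves
\[
\kappa\int_Q\frac{(u(x)-u(y))\bigl(u_M(x)^{2\beta+1}-u_M(y)^{2\beta+1}\bigr)}{|x-y|^{N+2s}}\,dx\,dy\ \le\ \int_\Omega\bigl(\lambda h(x)u^{-\gamma}+\tilde f(x,u)\bigr)u_M^{2\beta+1}\,dx .
\]

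\emph{Step 2: the left-hand side via Lemma \ref{bounded l2} and Sobolev.} Apply Lemma \ref{bounded l2} with the increasing function $\tilde h(t)=(\min\{t,M\})^{2\beta+1}$, whose associated $\tilde H(t)=\int_0^t\sqrt{\tilde h'(\tau)}\,d\tau$ equals $c_\beta\,t^{\beta+1}$ on $\{t\le M\}$ with $c_\beta=\tfrac{\sqrt{2\beta+1}}{\beta+1}$ and equals $c_\beta M^{\beta+1}$ on $\{t>M\}$, so that $\tilde H(u)=c_\beta\,u_M^{\beta+1}\in X_0$. The pointwise inequality of Lemma \ref{bounded l2} bounds the left side above by $\kappa\,\|\tilde H(u)\|^2=\kappa\,c_\beta^2\,\|u_M^{\beta+1}\|^2$ from below, and the Sobolev inequality \eqref{sobolev const} then gives a lower bound $\kappa\,c_\beta^2\,S\,\|u_M\|_{L^{2_s^*(\beta+1)}(\Omega)}^{2(\beta+1)}$. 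For the right side, $\tilde f(\cdot,u(\cdot))$ is a bounded function, so $\int_\Omega\tilde f(x,u)u_M^{2\beta+1}\,dx\le C\bigl(1+\|u\|_{L^{2(\beta+1)}(\Omega)}^{2(\beta+1)}\bigr)$ (H\"older plus $u_M\le u$); for the singular term I would use $0<\gamma<1$ to split $\Omega$ into $\{u\le 1\}$, where $h\,u^{-\gamma}u_M^{2\beta+1}=h\,u^{2\beta+1-\gamma}\le h\in L^1(\Omega)$, and $\{u>1\}$, where $u^{-\gamma}\le 1$ so that $\lambda h\,u^{-\gamma}u_M^{2\beta+1}\le\lambda h\,u_M^{2\beta+1}$. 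Absorbing everything, one arrives at the iterative inequality
\[
\|u_M\|_{L^{2_s^*(\beta+1)}(\Omega)}^{2(\beta+1)}\ \le\ C_\beta\Bigl(1+\|u\|_{L^{2(\beta+1)}(\Omega)}^{2(\beta+1)}\Bigr),\qquad C_\beta=O(\beta)\ \text{ as }\beta\to\infty,
\]
and letting $M\to\infty$ by monotone convergence promotes $u\in L^{2(\beta+1)}(\Omega)$ to $u\in L^{2_s^*(\beta+1)}(\Omega)$ with the same estimate.

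\emph{Step 3: iterate.} Starting from $u\in L^{2_s^*}(\Omega)$ (the embedding $X_0\hookrightarrow L^{2_s^*}(\Omega)$) and choosing $\beta_0$ with $2(\beta_0+1)=2_s^*$, set $\beta_{j+1}+1=\tfrac{2_s^*}{2}(\beta_j+1)\to\infty$. Since $2_s^*/2>1$, the $\beta_j$ grow geometrically while $C_{\beta_j}$ grows only polynomially, so taking logarithms of the resulting product of constants shows $\limsup_j\|u\|_{L^{2(\beta_j+1)}(\Omega)}<\infty$, whence $u\in L^\infty(\Omega)$. The step I expect to be the real obstacle is the singular term on $\{u>1\}$ against the merely integrable weight $h$: the crude bound $\int_{\{u>1\}}h\,u_M^{2\beta+1}$ is not controlled by the previous iteration level for $h\in L^1(\Omega)$ alone, so one must either exploit slightly better integrability of $h$, or use the self-referential structure of \eqref{main3} together with the fact that $|\{u>k\}|\to 0$ as $k\to\infty$ (so the $L^1$-mass of $h$ on the super-level sets vanishes) in a De Giorgi--type scheme; it is precisely here that $0<\gamma<1$ and the a priori integrability carried by Definition \ref{weak cutoff} must be used. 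The remaining ingredients — the sign of the Poisson term, the boundedness of $\tilde f$, and the bookkeeping of the constants $C_{\beta_j}$ — are routine.
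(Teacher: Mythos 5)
Your overall scheme --- truncate, test with a power of the truncation, convert the nonlocal quadratic form into an $L^{2_s^*}$-norm of $u_M^{\beta+1}$ via Lemma \ref{bounded l2}, discard the Poisson term by its sign, and run a Moser iteration with geometrically growing exponents --- is essentially the paper's scheme, up to bookkeeping: the paper first derives a Kato-type inequality \eqref{bound est 2} through the convexity Lemma \ref{bounded l1} and then tests with $(u_k+\delta)^{\beta}-\delta^{\beta}$, where $u_k=\min\{(u-1)^+,k\}$, instead of testing directly with $u_M^{2\beta+1}$. The problem is that your argument stops exactly at the decisive estimate, and you say so yourself: the term $\int_{\{u>1\}}\lambda h\,u_M^{2\beta+1}\,dx$ is not bounded by $C_\beta\bigl(1+\|u\|_{L^{2(\beta+1)}(\Omega)}^{2(\beta+1)}\bigr)$ when $h$ is only assumed in $L^1(\Omega)$, so the iterative inequality announced at the end of your Step 2 is not established and Step 3 has nothing to iterate. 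The alternative routes you sketch (extra integrability of $h$, a De Giorgi scheme using $|\{u>k\}|\to 0$) are not carried out, and the second cannot succeed with $h\in L^1(\Omega)$ alone: an $L^1$ datum does not in general yield bounded solutions, so some stronger integrability of $h$ must enter somewhere. As written, the proposal is an honest plan with the crucial step missing, not a proof.

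For comparison, here is how the paper closes that step. Working with $(u-1)^+$ confines the singular factor to $\{u\geq 1\}$, where $u^{-\gamma}\leq 1$; in \eqref{bound est 4} the paper then bounds $|\lambda h(x)u^{-\gamma}|\leq |\lambda|\,\|h\|_{\infty}$ and $|\tilde f(x,u)|\leq |c_1|+|c_2||u|^{p}$, so the entire datum on $\{u\geq1\}$ is of order $1+|u|^{p}$ with $0<p<1$, and H\"older gives the subcritical bound $C\frac{(\beta+1)^2}{4\beta}|u|_{2_s^*}^{p}\,|(u_k+\delta)^{\beta}|_q$ with $q=\frac{2_s^*}{2_s^*-p}$, after which the iteration closes as in Lemma 2.7 of \cite{positivity}. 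In other words, the ingredient you were missing is that the paper's proof measures $h$ in $L^{\infty}$ at this point (it explicitly invokes $\|h\|_{\infty}$, i.e.\ it uses more than the stated hypothesis $h\in L^1(\Omega)$ --- so your suspicion that $L^1$ alone is insufficient here is well founded); once $h$ is bounded, the troublesome term satisfies $\int_{\{u>1\}}h\,u_M^{2\beta+1}\leq \|h\|_{\infty}\int_{\{u\geq1\}}u^{p}\,u_M^{2\beta+1}$ (using $1\leq u^{p}$ on $\{u\geq1\}$), which is exactly the kind of term your Step 2 machinery already handles. If you supply that assumption (or otherwise quantify the integrability of $h$) and rerun your iteration, your route and the paper's become equivalent.
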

	\begin{proof}
		The proof is based on arguments as in \cite{positivity}. We will make use of the fact that
		$$\int_{Q}\frac{(u(x)-u(y))(\psi(x)-\psi(y))}{|x-y|^{N+2s}}dxdy=C\int_{Q}(-\Delta)^{s/2} u\cdot(-\Delta)^{s/2}\psi dxdy,$$ for $\psi\in X_0.$ For every small $\epsilon>0,$ consider the smooth function  
		\begin{equation*}
		g_{\epsilon}(t)=(\epsilon^2+t^2)^{\frac{1}{2}}
		\end{equation*}
		Note that the function $g_{\epsilon}$ is convex as well as Lipschitz. We choose $\psi=\tilde{\psi} g'_{\epsilon}(u)$ as the test function in \eqref{main3} for all positive $\tilde{\psi}\in C_c^{\infty}(\Omega)$. Now by taking $c=u(x), d=u(y), C=\psi(x)$ and $D=\psi(y)$ in Lemma \ref{bounded l1}, we get
		\begin{align}\label{bound est 1}
		(a+b\|u\|^2)&\int_{Q}\cfrac{(g_{\epsilon}(u(x))-g_{\epsilon}(u(y)))(\tilde{\psi}(x)-\tilde{\psi}(y))}{|x-y|^{N+2s}}dxdy\nonumber\\
		&\leq\int_\Omega\left(|\lambda h(x)u^{-\gamma}+\tilde{f}(x, u)|-\phi_uu\right)|g'_{\epsilon}(u)|\tilde{\psi} dx\nonumber\\
		&\leq\int_\Omega\left(|\lambda h(x) u^{-\gamma}+\tilde{f}(x, u)|\right)|g'_{\epsilon}(u)|\tilde{\psi} dx
		\end{align}
		Since, $g_{\epsilon}(t)\rightarrow|t|$ as $t\rightarrow0$, hence $|g'_{\epsilon}(t)|\leq1$ for all $t\geq0$. Therefore, on using the Fatou's Lemma and passing the limit $\epsilon\rightarrow0$ in \eqref{bound est 1}, we obtain
		\begin{align}\label{bound est 2}
		(a+b\|u\|^2)\int_{Q}\cfrac{(|u(x)|-|u(y)|)(\tilde{\psi}(x)
			-\tilde{\psi}(y))}{|x-y|^{N+2s}}dxdy\leq\int_\Omega\left(|\lambda h(x) u^{-\gamma}+\tilde{f}(x, u)|\right)\tilde{\psi} dx
		\end{align}
		for all $\tilde{\psi}\in C_c^{\infty}(\Omega)$ with $\tilde{\psi}>0.$ The inequality \eqref{bound est 2} remains true for all $\tilde{\psi}\in X_0$ with $\tilde{\psi}\geq0.$ We define the {cut-off} function $u_k=\min\{(u-1)^+, k\}\in X_0$ for $k>0.$ Now for any given $\beta>0$ and $\delta>0$, we choose $\tilde{\psi}=(u_k+\delta)^{\beta}-\delta^{\beta}$ as the test function in \eqref{bound est 2} and get
		\begin{align}\label{bound est 3}
		(a+b\|u\|^2)&\int_{Q}\cfrac{(|u(x)|-|u(y)|)((u_k(x)+\delta)^{\beta}-(u_k(y)+\delta)^{\beta})}{|x-y|^{N+2s}}dxdy\nonumber\\
		&\leq\int_\Omega\left(|\lambda h(x) u^{-\gamma}+\tilde{f}(x, u)|\right)\left((u_k+\delta)^{\beta}-\delta^{\beta}\right) dx
		\end{align}
		Now applying the Lemma \ref{bounded l2} to the function $\tilde{h}(u)=(u_k+\delta)^{\beta},$ we get
		\begin{align}\label{bound est 4}
		\begin{split}
		&(a+b\|u\|^2)\int_{Q}\cfrac{|((u_k(x)+\delta)^{\frac{\beta+1}{2}}
			-(u_k(y)+\delta)^{\frac{\beta+1}{2}})|^2}{|x-y|^{N+2s}}dxdy\\
		&\leq\frac{(\beta+1)^2}{4\beta}(a+b\|u\|^2)\int_{Q}\cfrac{(|u(x)|-|u(y)|)((u_k(x)+\delta)^{\beta}
			-(u_k(y)+\delta)^{\beta})}{|x-y|^{N+2s}}dxdy\\
		&\leq\frac{(\beta+1)^2}{4\beta}\int_\Omega\left(|\lambda h(x) u^{-\gamma}+\tilde{f}(x, u)|\right)\left((u_k+\delta)^{\beta}-\delta^{\beta}\right) dx\\
		&\leq\frac{(\beta+1)^2}{4\beta}\int_\Omega\left(|\lambda h(x) u^{-\gamma}|+|\tilde{f}(x, u)|\right)\left((u_k+\delta)^{\beta}-\delta^{\beta}\right) dx\\
		&=\frac{(\beta+1)^2}{4\beta}\int_{\{u\geq1\}}\left(|\lambda h(x) u^{-\gamma}|+|\tilde{f}(x, u)|\right)\left((u_k+\delta)^{\beta}-\delta^{\beta}\right) dx\\	
		&\leq\frac{(\beta+1)^2}{4\beta}\int_{\{u\geq1\}}\left(|\lambda|\|h\|_{\infty}+(|c_1|+|c_2||u|^{p})\right)\left((u_k+\delta)^{\beta}-\delta^{\beta}\right) dx\\				
		&\leq C_1\frac{(\beta+1)^2}{4\beta}\int_{\{u\geq1\}}\left(1+|u|^{p}\right)\left((u_k+\delta)^{\beta}-\delta^{\beta}\right) dx\\
		&\leq 2C_1\frac{(\beta+1)^2}{4\beta}\int_{\{u\geq1\}}|u|^{p}\left((u_k+\delta)^{\beta}-\delta^{\beta}\right) dx\\
		&\leq C\frac{(\beta+1)^2}{4\beta}|u|_{2_s^*}^{p}|(u_k+\delta)^{\beta}|_q
		\end{split}
		\end{align}
		where, $q=\frac{2_s^*}{2_s^*-p}$ and $C=\max\{1,|\lambda|\}.$ The rest of the proof is similar to the Lemma 2.7 in \cite{positivity} to obtain
		\begin{equation}\label{bound est 11}
		\|u_k\|_{\infty}\leq C\eta^{\frac{\eta}{(\eta-1)^2}}\left(|\Omega|^{1-\frac{1}{q}-\frac{2s}{N}} \right)^{\frac{\eta}{\eta-1}}\left(|(u-1)^+|_q+\delta|\Omega|^{\frac{1}{q}}\right)
		\end{equation}
		Now letting $k\rightarrow\infty$ in \eqref{bound est 11}, we have
		\begin{equation}\label{bound est 12}
		\|(u-1)^+\|_{\infty}\leq C\eta^{\frac{\eta}{(\eta-1)^2}}\left(|\Omega|^{1-\frac{1}{q}-\frac{2s}{N}} \right)^{\frac{\eta}{\eta-1}}\left(|(u-1)^+|_q+\delta|\Omega|^{\frac{1}{q}}\right)
		\end{equation}
		Hence, we conclude that $u\in L^{\infty}(\Omega).$
	\end{proof}
	
	\begin{proof}[{\bf Proof of Theorem \ref{main thm1}}]
		By using the assumption $(A1)$ and the definition of $\xi$, we get the functional $\tilde{I}$ is even and $\tilde{I}(0)=0.$ Thus, on using Theorem \ref{sym mountain}, Lemma \ref{lemma ps} and Lemma \ref{lemma genus}, we conclude that $\tilde{I}$ has sequence of critical points $\{u_n\}$ such that $\tilde{I}(u_n)<0$ and $\tilde{I}(u_n)\rightarrow0^-$.\\
		We now prove that the critical points of $\tilde{I}$ are nonnegative.\\
		{\bf Claim:} Let $u_n$ be a critical point of $\tilde{I}$, then $u_n\geq0$ a.e. in $X_0$ for every $n\in\mathbb{N}$.
		\begin{proof}
			We first divide the domain as $\Omega= \Omega^+\cup\Omega^-$, where $\Omega^+=\{x\in X_0: u_n(x)\geq0 \}$ and $\Omega^-=\{x\in X_0: u_n(x)<0 \}$. We define $u_n=u_n^+-u_n^-$, where $u_n^+(x)=\max\{u_n(x), 0\}$ and $u_n^-(x)=\max\{-u_n(x), 0\}$. We proceed through a contradiction by taking $u_n<0$ a.e. in $\Omega$. Then on choosing, $\phi=u_n^-$ as the test function in the equation \eqref{weak cutoff} in association with the inequality $(a-b)(a^--b^-)\leq-(a^--b^-)^2$, we obtain
			\begin{align*}
			&\int_{\Omega}\left(\lambda h(x) \frac{sign(u_n)u_n^-}{|u_n|^{\gamma}}+\tilde{f}(x,u_n)u_n^-\right)dx\\
			&=(a+b[u_n]^2)\int_{Q}\frac{(u_n(x)-u_n(y))(u_n^-(x)-u_n^-(y))}{|x-y|^{N+2s}}dxdy+\int_{\Omega}\phi_{u_n}u_nu_n^-dx\\
			&=-(a+b\|u_n\|^2)\|u_n^-\|^2-\int_{\Omega}\phi_{u_n}\|u_n^-\|^2dx\\
			&\Rightarrow\lambda\int_{\Omega^-}h(x)|u_n^-|^{1-\gamma}dx<0.
			\end{align*}
			Therefore, $|\Omega^-|=0$, which is a contradiction to the assumption $u_n<0$ a.e. in $\Omega$.
		\end{proof}
		\noindent We now prove $u_n\rightarrow0$ in $X_0.$ Indeed by the definition of $\tilde{I}$, we obtain
		\begin{align*}
		\frac{1}{p}\langle\tilde{I}^{'}(u_n), u_n\rangle-\tilde{I}(u_n)&=\frac{1}{p}\left[(a+b\|u_n\|^2)\|u_n\|^2+\int_{\Omega}\phi_{u_n}u_n^2-\int_{\Omega}\left(\lambda\frac{h(x)sign(u_n)u_n}{|u_n|^{\gamma}}+\tilde{f}(x,u_n)u_n\right)dx\right]\\
		&-\left[\frac{a}{2}\|u_n\|^2+\frac{b}{4}\|u_n\|^4+\frac{1}{4}\int_{\Omega}\phi_{u_n}u_n^2-\int_{\Omega}\left(\frac{\lambda h(x)}{1-\gamma}|u_n|^{1-\gamma}+\tilde{F}(x,u_n)\right)dx\right]\\
		&=a(\frac{1}{p}-\frac{1}{2})\|u_n\|^2+b(\frac{1}{p}-\frac{1}{4})\|u_n\|^4+(\frac{1}{p}-\frac{1}{4})\int_{\Omega}\phi_{u_n}u_n^2\\ &-\lambda(\frac{1}{p}-\frac{1}{1-\gamma})\int_{\Omega}h(x)|u_n|^{1-\gamma}dx+\frac{1}{p}\int_{\Omega}(p\tilde{F}(x,u_n)-\tilde{f}(x,u_n))dx\\
		&\geq a(\frac{1}{p}-\frac{1}{2})\|u_n\|^2 +b(\frac{1}{p}-\frac{1}{4})\|u_n\|^4 +\lambda(\frac{1}{1-\gamma}-\frac{1}{p})\int_{\Omega}h(x)|u_n|^{1-\gamma}dx\\
		&\geq(\frac{1}{p}-\frac{1}{2})\|u_n\|^2
		\end{align*}
		Therefore, by using the fact
		\begin{align*}
		&\frac{1}{p}\langle\tilde{I}^{'}(u_n), u_n\rangle-\tilde{I}(u_n)=o(1)\\
		&\Rightarrow(\frac{1}{p}-\frac{1}{2})\|u_n\|^2\leq o(1),
		\end{align*}
		as $n\rightarrow\infty$. Since, $1-\gamma<p<2$, we conclude that $u_n\rightarrow0$ in $X_0$. Thus from the Lemma \ref{bounded}, we can obtain $\|u_n\|_{L^{\infty}(\Omega)}\leq l$ as $n\rightarrow\infty$, thanks to the Moser iteration method. Hence, the problem \eqref{main2} has infinitely many solutions. Moreover, by using $u_n\geq0$ and $\tilde{I}(u_n)<0$, we conclude that the problem \eqref{problem main} has infinitely many weak solutions in $X_0$. Thus Theorem \ref{main thm1} is proved.
	\end{proof}

	\section{Existence of solution for $\gamma>1$.}	
		This section is fully devoted to establish the existence of a weak solution to the problem \eqref{problem main} in $X_0$. Further, we will prove that for $k\equiv0$, the problem \eqref{problem main} possesses a unique solution. Let us define the following two subsets of $X_0$ similar to the Nehari manifold.
	$$N_{1}=\{u\in X_0: (a+b[u]^{2})\|u\|^{2}+\int_{\Omega}\phi_{u}u^{2} -\lambda\int_{\Omega}h(x)|u|^{1-\gamma}-\int_{\Omega}k(x)|u|^{1+p}\geq0\},$$
	$$N_{2}=\{u\in X_0: (a+b[u]^{2})\|u\|^{2}+\int_{\Omega}\phi_{u}u^{2} -\lambda\int_{\Omega}h(x)|u|^{1-\gamma}-\int_{\Omega}k(x)|u|^{1+p}=0\}.$$
	We will show that the fractional Kirchhoff-Schr\"{o}dinger-Poisson system with a strong singularity has a weak solution in $N_2$. One can see that $N_2$ is not closed. We will prove that $N_1$ is closed in $X_0$ and the functional $I$ is coercive and bounded below on $N_1$. Further we will obtain a minimizing sequence $\{u_n\}$ of $c=\inf_{N_{1}}I$ such that $\{u_n\}$ converges to $u\in X_0$. Finally, we will show that $u\in N_2$ and hence $u$ is a weak solution to the problem \eqref{problem reduced}. We begin with the following Lemmas.
	\begin{lemma}\label{l2.2}
		Let $\int_{\Omega}\lambda h(x)|u|^{1-\gamma}<\infty$ for some $u\in X_0$. Then there exists a unique $t_0>0$ such that $t_0u\in N_{2}$ and $tu\in N_{1}$, for $t\geq t_0$, i.e. $N_{1}$, $N_{2}\neq\emptyset.$ Moreover, for $t\geq 0, \psi \in X_0$ the function  $f$ defined as $\theta(t)=t(u+t\psi)$ is continuous on $[0, \infty)$.
	\end{lemma}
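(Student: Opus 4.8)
The plan is to carry out the standard fibering-map analysis for the set $N_1$ along the ray $\{tu:t>0\}$, in the spirit of \cite{sun2013,zhang2016}. Fix $u\in X_0$ with $\int_\Omega\lambda h(x)|u|^{1-\gamma}\,dx<\infty$; since $1-\gamma<0$ this already forces $u\not\equiv 0$. Using $[tu]^2=t^2\|u\|^2$ and $\phi_{tu}=t^2\phi_u$ from Lemma \ref{phi prop}(iii), membership of $tu$ in $N_1$ is governed by the sign of
\begin{equation*}
g(t):=a\,t^2\|u\|^2+b\,t^4\|u\|^4+t^4\!\int_\Omega\phi_u u^2\,dx-\lambda\,t^{1-\gamma}\!\int_\Omega h(x)|u|^{1-\gamma}\,dx-t^{1+p}\!\int_\Omega k(x)|u|^{1+p}\,dx,
\end{equation*}
so that $g(t)=At^2+Bt^4-Dt^{1-\gamma}-Et^{1+p}$ with $A:=a\|u\|^2\ge 0$, $B:=b\|u\|^4+\int_\Omega\phi_u u^2\,dx$, $D:=\lambda\int_\Omega h(x)|u|^{1-\gamma}\,dx>0$ and $E:=\int_\Omega k(x)|u|^{1+p}\,dx>0$. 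Since $a+b>0$ and $u\ne 0$, Lemma \ref{phi prop}(i)--(ii) gives $\int_\Omega\phi_u u^2\,dx>0$, hence $B>0$; also $D,E>0$ because $h,k>0$ a.e. Then $tu\in N_2\iff g(t)=0$ and $tu\in N_1\iff g(t)\ge 0$.

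First I would settle existence. Because $\gamma>1$ we have $t^{1-\gamma}\to+\infty$ as $t\to 0^+$ while the remaining terms stay bounded, so $g(t)\to-\infty$ as $t\to 0^+$; and since $4>2$ and $4>1+p$ (as $p<1$), the term $Bt^4$ dominates as $t\to+\infty$, so $g(t)\to+\infty$. Continuity of $g$ on $(0,\infty)$ and the intermediate value theorem produce some $t_0>0$ with $g(t_0)=0$, i.e. $t_0u\in N_2$; in particular $N_1,N_2\neq\emptyset$.

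The hard part is uniqueness of $t_0$ together with the inclusion $tu\in N_1$ for all $t\ge t_0$, which both follow once I show $g$ changes sign exactly once, from negative to positive. For this I would use a double factorization. Writing $g(t)=t^{1-\gamma}\varphi(t)$ with $\varphi(t):=At^{1+\gamma}+Bt^{3+\gamma}-D-Et^{p+\gamma}$, the zeros and signs of $g$ and $\varphi$ coincide since $t^{1-\gamma}>0$. Next, $\varphi'(t)=t^{\,p+\gamma-1}\,\omega(t)$ with $\omega(t):=A(1+\gamma)t^{1-p}+B(3+\gamma)t^{3-p}-E(p+\gamma)$, the factorization being legitimate precisely because $0<p<1$ makes $p+\gamma-1$ the smallest of the three exponents and $1-p,3-p>0$. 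The function $\omega$ is strictly increasing on $(0,\infty)$ (as $A\ge 0$, $B>0$), with $\omega(0^+)=-E(p+\gamma)<0$ and $\omega(+\infty)=+\infty$, hence it has a unique zero $t_1$; therefore $\varphi$ is strictly decreasing on $(0,t_1)$ and strictly increasing on $(t_1,\infty)$. Since $\varphi(0^+)=-D<0$, $\varphi$ remains negative on $(0,t_1]$, and on $(t_1,\infty)$ it increases from $\varphi(t_1)<0$ to $+\infty$; thus $\varphi$, and hence $g$, vanishes at exactly one point $t_0>t_1$, with $g<0$ on $(0,t_0)$ and $g>0$ on $(t_0,\infty)$. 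This yields uniqueness of $t_0$, the inclusion $t_0u\in N_2$, and $tu\in N_1$ for every $t\ge t_0$.

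Finally, the continuity statement is immediate: the map $\theta:[0,\infty)\to X_0$ given by $\theta(t)=t(u+t\psi)=tu+t^2\psi$ satisfies $\|\theta(t)-\theta(t')\|\le|t-t'|\,\|u\|+|t^2-t'^2|\,\|\psi\|\to 0$ as $t'\to t$, so $\theta$ is continuous on $[0,\infty)$; this will be used later to differentiate the energy $I$ along admissible perturbations that remain in $N_1$. The only genuinely delicate step is the monotonicity analysis of the fibering map, and the two successive factorizations $g=t^{1-\gamma}\varphi$ and $\varphi'=t^{\,p+\gamma-1}\omega$ are exactly what reduce it to the trivial monotonicity of $\omega$; the hypotheses $\gamma>1$ and $0<p<1$ enter precisely there.
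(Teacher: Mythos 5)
Your treatment of the first assertion is correct and in fact more careful than the paper's: the paper simply writes down $I(tu)$, notes $tu\in N_1\Leftrightarrow \frac{d}{dt}I(tu)\ge 0$ and $tu\in N_2\Leftrightarrow \frac{d}{dt}I(tu)=0$, and asserts from $0<p<1<\gamma$ that $I(tu)\to+\infty$ as $t\to 0^+$ and $t\to+\infty$ and that there is a unique critical point $t_0$ with the stated properties; your double factorization $g=t^{1-\gamma}\varphi$, $\varphi'=t^{p+\gamma-1}\omega$ supplies the single-sign-change argument that the paper leaves implicit, and it is valid (including the degenerate case $a=0$, since $B>0$ as long as $u\neq 0$).

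The second assertion, however, you have misread, and this is a genuine gap. In the paper's notation $t(v)$ denotes the unique positive \emph{scalar} produced by the first part of the lemma for the element $v$, i.e.\ $t(v)v\in N_2$; thus $\theta(t)=t(u+t\psi)$ is the real-valued fibering multiplier of the element $u+t\psi$, not the curve $t\mapsto tu+t^2\psi$ in $X_0$. (This is forced by the later use in Case 2 of the proof of Theorem \ref{main theorem}, where $\theta_n(t)=t(u_n+t\psi)$ satisfies $\theta_n(0)=1$ and one estimates the Dini derivative $D_+\theta_n(0)$ --- none of which makes sense for the vector-valued curve, whose continuity is trivial and would not merit a lemma.) The continuity of the scalar map is the substantive claim: one first observes that for $t,\psi\ge 0$ one has $\lambda h(x)|u+t\psi|^{1-\gamma}\le \lambda h(x)|u|^{1-\gamma}$, so $\theta(t)$ is well defined by the first part; then, for $t_n\to t$, one uses the $N_2$-identity satisfied by $\theta(t_n)(u+t_n\psi)$ to deduce that $\{\theta(t_n)\}$ is bounded, passes to the limit in that identity using dominated convergence on the singular term (with the domination above), and identifies the limit of any convergent subsequence with $\theta(t)$ by the uniqueness you established in the first part. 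Your norm-continuity computation $\|\theta(t)-\theta(t')\|\le |t-t'|\|u\|+|t^2-t'^2|\|\psi\|$ proves a different, trivial statement and cannot replace this argument in the subsequent proof.
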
	
	\begin{proof}
	Let $\int_{\Omega}\lambda h(x)|u|^{1-\gamma}<\infty$ for some $u\in X_0$. Now for $t>0$, we get
	$$I(tu)= \frac{at^{2}}{2}\|u\|^{2}+\frac{bt^{4}}{4}\|u\|^{4} +\frac{t^{4}}{4}\int_{\Omega}\phi_{u}u^{2}-\frac{t^{1-\gamma}}{1-\gamma}\int_{\Omega}\lambda h(x)|u|^{1-\gamma}-\frac{t^{p+1}}{p+1}\int_{\Omega}k(x)|u|^{1+p}.$$
	It is easy to see that $tu\in N_{1} \Leftrightarrow I'(tu) \geq 0$ and $tu\in N_{2} \Leftrightarrow I'(tu) = 0$. Also, since $0<p<1<\gamma$, $I(tu)\rightarrow+\infty$, if $t\rightarrow 0^+$ as well as $t\rightarrow +\infty$ and there exists a unique $t_0>0$ such that $I'(t_0u) = 0$, $I'(tu)\geq 0$, $t\geq t_0$ and $I(t_0u) = \min_{t\geq 0}I(tu)$. Therefore, $tu\in N_{1}$, $t_0u\in N_{2}$ for $t\geq t_0$,  and $I(tu)\geq I(t_0u)$.\\
	Again, observe that for $t, \psi\geq 0$, $\int_{\Omega}\lambda h(x)|u+t\psi|^{1-\gamma}<\infty$. Now, consider a nonnegative sequence $\{t_{n}\}$ such that $t_{n}\rightarrow t$ as $n\rightarrow\infty$. On using the arguments as above, there exists $\theta(t_{n}),\theta(t)\geq 0$ such that $\theta(t_{n})(u+t_{n}\psi), \theta(t)(u+t\psi)\in N_{2}$. Thus, we get
	\begin{align}\label{2.1}
	\begin{split}
	a\theta^{1+\gamma}(t_{n})\| u+t_{n}\psi\|^{2}&+\theta^{3+\gamma}(t_{n})\left(b\| u+t_{n}\psi\|^{4}+\int_{\Omega}\phi_{u+t_{n}\psi}(u+t_{n}\psi)^{2}\right)\\&-\theta^{p+\gamma}(t_{n})\int_{\Omega}k(x)|u+t_{n}\psi|^{1+p}= \int_{\Omega}\lambda h(x)|u+t_{n}\psi|^{1-\gamma}
	\end{split}
	\end{align} and	
	\begin{align}\label{2.2}
	\begin{split}
	a\theta^{1+\gamma}(t)\|u+t\psi\|^{2}&+\theta^{3+\gamma}(t)\left(b\|u+t\psi\|^{4}+\int_{\Omega}\phi_{u+t\psi}(u+t\psi)^{2}\right)\\&-\theta^{p+\gamma}(t)\int_{\Omega}k(x)|u+t\psi|^{1+p}= \int_{\Omega}\lambda h(x)|u+t\psi|^{1-\gamma}.
	\end{split}
	\end{align}
	Now for all $n \in \mathbb{N}$, we have $\lambda h(x)|u+t_{n}\psi|^{1-\gamma}\leq \lambda h(x)|u|^{1-\gamma}$.and for each $x\in\Omega$, we have the pointwise convergence $\lambda h(x)|u+t_{n}\psi|^{1-\gamma}\rightarrow \lambda h(x)|u+t\psi|^{1-\gamma}$ as $n\rightarrow\infty$. Therefore, by using Lebesgue's dominated convergence theorem, we get $\int_{\Omega}\lambda h(x)|u+t_{n}\psi|^{1-\gamma}\rightarrow\int_{\Omega}\lambda h(x)|u+t\psi|^{1-\gamma}$ as $n\rightarrow\infty$. Further, from \eqref{2.1}, one can see that the sequence $\{\theta(t_{n})\}$ is bounded. Therefore, it has a convergent subsequence. Let $\{\theta(t_{n_k})\}$ convergence to $s$. Then, on using \eqref{2.1} and \eqref{2.2} and the above arguments, we can conclude that $s=\theta(t)$. Hence $\theta$ is continuous.
\end{proof}
	\noindent In the following Lemma, we establish that $N_1$ is closed in $X_0$ and the the functional $I$ is coercive and bounded below on $N_1$.
	\begin{lemma}\label{l2.}
		$N_1$ is closed in $X_0$ and for  all for $u\in N_{1}$, there exists $C>0$ such that $\|u\|\geq C$. Moreover, the functional $I$ is coercive and bounded below on $N_1$.
	\end{lemma}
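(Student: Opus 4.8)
The plan is to prove the three assertions in turn; throughout, the one genuinely delicate object is the singular integral $\int_\Omega h(x)|u|^{1-\gamma}$, which for $\gamma>1$ is only lower semicontinuous in $u$ and is frequently $+\infty$, so I would control it exclusively through Fatou's lemma rather than through any continuity. For closedness, take $\{u_n\}\subset N_1$ with $u_n\to u$ in $X_0$; passing to a subsequence, $X_0\hookrightarrow L^{2_s^*}(\Omega)$ gives $u_n\to u$ a.e.\ in $\Omega$. Then $\|u_n\|\to\|u\|$, so $(a+b[u_n]^2)\|u_n\|^2\to(a+b[u]^2)\|u\|^2$; by Lemma \ref{phi prop}(i),(iv) we get $\int_\Omega\phi_{u_n}u_n^2=\|\phi_{u_n}\|^2\to\|\phi_u\|^2=\int_\Omega\phi_uu^2$; since $1+p<2<2_s^*$ we have $u_n\to u$ in $L^{1+p}(\Omega)$ and, with $k\in L^\infty(\Omega)$, $\int_\Omega k(x)|u_n|^{1+p}\to\int_\Omega k(x)|u|^{1+p}$; and Fatou's lemma (using $h>0$ a.e.\ and that $t\mapsto|t|^{1-\gamma}$ is lower semicontinuous with value $+\infty$ at $t=0$) gives $\liminf_n\int_\Omega h(x)|u_n|^{1-\gamma}\ge\int_\Omega h(x)|u|^{1-\gamma}$. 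Writing the membership condition as
$$(a+b[u_n]^2)\|u_n\|^2+\int_\Omega\phi_{u_n}u_n^2-\int_\Omega k(x)|u_n|^{1+p}\ge\lambda\int_\Omega h(x)|u_n|^{1-\gamma},$$
the left side converges while the $\liminf$ of the right side is $\ge\lambda\int_\Omega h(x)|u|^{1-\gamma}$; hence $u\in N_1$ (and, incidentally, $\int_\Omega h(x)|u|^{1-\gamma}<\infty$).

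For the lower bound, note first that $u\in N_1$ forces $\int_\Omega h(x)|u|^{1-\gamma}<\infty$, since otherwise the defining expression equals $-\infty$; rearranging it and using Lemma \ref{phi prop}(i),
$$\lambda\int_\Omega h(x)|u|^{1-\gamma}\le (a+b\|u\|^2)\|u\|^2+C_\phi\|u\|^4 .$$
If there were $\{u_n\}\subset N_1$ with $\|u_n\|\to 0$, the right side would tend to $0$, forcing $\int_\Omega h(x)|u_n|^{1-\gamma}\to 0$; but along a subsequence $u_n\to 0$ a.e., so $h(x)|u_n|^{1-\gamma}\to+\infty$ a.e., and Fatou gives $\liminf_n\int_\Omega h(x)|u_n|^{1-\gamma}=+\infty$, a contradiction. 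Hence $C:=\inf_{u\in N_1}\|u\|>0$, the infimum being over a nonempty set by Lemma \ref{l2.2}.

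Finally, for coercivity and lower boundedness on $N_1$: since $\gamma>1$, the term $-\tfrac{\lambda}{1-\gamma}\int_\Omega h(x)|u|^{1-\gamma}=\tfrac{\lambda}{\gamma-1}\int_\Omega h(x)|u|^{1-\gamma}$ is nonnegative and, for $u\in N_1$, finite, so
\begin{align*}
I(u)&=\frac a2\|u\|^2+\frac b4\|u\|^4+\frac14\int_\Omega\phi_uu^2+\frac{\lambda}{\gamma-1}\int_\Omega h(x)|u|^{1-\gamma}-\frac1{p+1}\int_\Omega k(x)|u|^{1+p}\\
&\ge\frac a2\|u\|^2+\frac b4\|u\|^4-C_1\|u\|^{1+p},
\end{align*}
where $C_1$ comes from $k\in L^\infty(\Omega)$, H\"older's inequality and the Sobolev embedding \eqref{sobolev const}. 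Because $a+b>0$ and $1+p<2$, the last right-hand side is bounded below on $[0,\infty)$ and tends to $+\infty$ as $\|u\|\to\infty$, which gives both claims.

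The obstacle I expect is precisely the handling of the singular term: its discontinuity and unboundedness mean one has only the Fatou lower bound at one's disposal, and the whole argument is arranged so that this one-sided estimate points the correct way in each step — in the inequality defining $N_1$, in forcing blow-up as $\|u\|\to 0$, and as a nonnegative summand of $I$. It is worth noting that only $h\in L^1(\Omega)$ is used, so no H\"older-type lower bound of $\int_\Omega h(x)|u|^{1-\gamma}$ by a power of $\|u\|$ is available, which is exactly why the estimate $\|u\|\ge C$ must be obtained by contradiction rather than directly.
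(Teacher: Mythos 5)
Your proof is correct, and for two of the three assertions (closedness of $N_1$ via a.e.\ convergence plus Fatou on the singular term and convergence of the remaining terms; coercivity and lower boundedness by dropping the nonnegative terms $\tfrac14\int_\Omega\phi_u u^2$ and $\tfrac{\lambda}{\gamma-1}\int_\Omega h|u|^{1-\gamma}$ and estimating $\int_\Omega k|u|^{1+p}\le C\|u\|^{1+p}$ with $1+p<2$) it coincides with the paper's argument. The only genuine divergence is the step $\|u\|\ge C$ on $N_1$: the paper also argues by contradiction with a sequence $u_n\to0$ in $X_0$, but instead of Fatou it invokes the reverse H\"older inequality to get the quantitative lower bound
\[
\int_\Omega \lambda h(x)|u_n|^{1-\gamma}\;\ge\;\Bigl(\int_\Omega|\lambda h|^{1/\gamma}\Bigr)^{\gamma}\Bigl(\int_\Omega|u_n|\Bigr)^{1-\gamma}\;\ge\;c\,\|u_n\|^{1-\gamma},
\]
which blows up as $\|u_n\|\to0$ because $1-\gamma<0$, contradicting the vanishing of the left-hand side of the $N_1$-inequality. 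Your route replaces this by a purely qualitative blow-up: along an a.e.-convergent subsequence $u_n\to0$, $h|u_n|^{1-\gamma}\to+\infty$ a.e.\ and Fatou forces $\liminf_n\int_\Omega h|u_n|^{1-\gamma}=+\infty$. Both are valid; the paper's version yields an explicit rate $\|u\|^{1-\gamma}$ (using only $h\in L^1$, since $|h|^{1/\gamma}\in L^1(\Omega)$ on the bounded domain), while yours is slightly more elementary, avoids the reverse H\"older inequality altogether, and, as you note, relies only on $h>0$ a.e.\ together with the one-sided Fatou estimate that drives the rest of the lemma as well.
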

\begin{proof}
	We first show that $N_1$ is closed. Let $\{u_{n}\}\subset N_{1}$ be such that $u_{n} \rightarrow u$ in $X_0$. Since, $\{u_{n}\}\subset N_{1}$ and $\int_{\Omega}\lambda h(x)|u_{n}|^{1-\gamma}<\infty$, then $u_{n}(x)>0$ a.e. in $\Omega$ and then up to a subsequence, $u_{n}(x)\rightarrow u(x)$ a.e. in $\Omega$. Therefore, on applying the Fatou's lemma and then using Sobolev embedding, we get
	\begin{align*}
	\int_{\Omega}\lambda h(x)|u|^{1-\gamma}&\leq\lim_{n\rightarrow\infty}\inf\int_{\Omega}\lambda h(x)|u_{n}|^{1-\gamma}\\
	&\leq\lim_{n\rightarrow\infty}\inf\left((a+b\|u_{n}\|^{2})\|u_{n}\|^{2}+\int_{\Omega}\phi_{u_{n}}u_{n}^{2}-\int_{\Omega}k(x)|u_{n}|^{1+p}\right)\\
	&\leq(a+b\| u\|^{2})\| u\|^{2}+\int_{\Omega}\phi_{u}u^{2}-\int_{\Omega}k(x)|u|^{1+p}.
	\end{align*}
	Thus we have $u\in N_{1}$ and hence $N_{1}$ is closed in $X_0$. We prove the functional $I$ is bounded below on $N_1$ by using the method of contradiction. Suppose, there exists $\{u_{n}\}\subset N_{1}$ such that $u_{n}\rightarrow 0$ in $X_0$ as $n\rightarrow\infty$. Then, on using the Reverse H\"{o}lder inequality, we get
	\begin{align*}
	(a+b\|u_{n}\|^{2})\|u_{n}\|^{2}+\int_{\Omega}\phi_{{u}_{n}}u_{n}^{2}&\geq\int_{\Omega}\lambda h(x)|u_{n}|^{1-\gamma}+\int_{\Omega}k(x)|u_{n}|^{1+p}\\
	&\geq\left(\int_{\Omega}|\lambda h(x)|^{\frac{1}{\gamma}}\right)^{\gamma}\left(\int_{\Omega}|u_{n}|\right)^{1-\gamma}\\
	&\geq c\left(\int_{\Omega}|\lambda h(x)|^{\frac{1}{\gamma}}\right)^{\gamma}\| u_{n}\|^{1-\gamma}
	\end{align*}
	where, $c$ is a positive constant. This gives a contradiction, since $\gamma>1$. Therefore, there exists $C>0$ such that $\|u\|\geq C$ for all $u\in N_{1}$.\\
	Since, $u\in N_{1}$ implies that $\int_{\Omega}\lambda h(x)|u|^{1-\gamma}\leq(a+b\|u\|^{2})\|u\|^{2} +\int_{\Omega}\phi_{u}u^{2}-\int_{\Omega}k(x)u^{1+p}<\infty$. Therefore, from the definition \eqref{functional} of $I$, we have \begin{align}\label{2.3}
	I(u)&=\frac{a}{2}\|u\|^{2}+\frac{b}{4}\|u\|^{4}+\frac{1}{4}\int_{\Omega}\phi_{u}u^{2}-\frac{1}{1-\gamma}\int_{\Omega}\lambda h(x)|u|^{1-\gamma}-\frac{1}{p+1}\int_{\Omega}k(x)|u|^{1+p}\nonumber\\
	&\geq\frac{a}{2}\| u\|^{2}+\frac{b}{4}\| u\|^{4}-c|k|_{\infty}\| u\|^{1+p}.
	\end{align}
	Now, since $0<p<1$ and $a+b\geq 0$, therefore by using the Lemma \ref{l2.2}, we get that $I$ is coercive and bounded below on $N_{1}.$
	\end{proof}
	\begin{lemma}\label{lemma min-seq}
		Assume the compatibility condition \eqref{compatibility} holds. Then there exists a minimizing sequence $\{u_{n}\}\subset N_{1}$ of $c=\inf\limits_{N_{1}}I$, i.e. there exists $u\in N_{1}$ such that $I(u)=c$. Moreover,  $u\in N_{2}$.
	\end{lemma}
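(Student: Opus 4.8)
The plan is to prove that the infimum $c=\inf_{N_1}I$ is attained on $N_1$ and that the minimizer automatically sits on $N_2$, combining the coercivity of Lemma~\ref{l2.} with the fibering-map analysis of Lemma~\ref{l2.2}. First, by the compatibility condition \eqref{compatibility} and Lemma~\ref{l2.2} we have $N_1\neq\emptyset$, and by Lemma~\ref{l2.} the functional $I$ is coercive and bounded below on $N_1$; hence $c=\inf_{N_1}I$ is a finite real number. I would pick $\{u_n\}\subset N_1$ with $I(u_n)\to c$; coercivity forces $\{u_n\}$ to be bounded in $X_0$, so, along a subsequence, $u_n\rightharpoonup u$ in $X_0$, $u_n\to u$ in $L^q(\Omega)$ for every $q\in[1,2_s^*)$, and $u_n\to u$ a.e.\ in $\Omega$. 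Recall that each $u_n\in N_1$ satisfies $\int_\Omega\lambda h|u_n|^{1-\gamma}<\infty$ (so, as in Lemma~\ref{l2.}, $u_n>0$ a.e.\ and thus $u\ge0$ a.e.).

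Next, I would verify that $I(u)\le\liminf_n I(u_n)$ by examining each term: $\|\cdot\|^2$ and $\|\cdot\|^4$ are weakly lower semicontinuous; $\int_\Omega\phi_{u_n}u_n^2\to\int_\Omega\phi_u u^2$ as a standard consequence of the $L^q$-convergence of $u_n$ (cf.\ Lemma~\ref{phi prop}(iv)); $\int_\Omega k|u_n|^{1+p}\to\int_\Omega k|u|^{1+p}$ because $k\in L^\infty(\Omega)$ and $1+p<2_s^*$; and, since $\gamma>1$ makes $\tfrac{1}{1-\gamma}<0$, Fatou's lemma applied to the a.e.\ convergence $h|u_n|^{1-\gamma}\to h|u|^{1-\gamma}$ gives $\int_\Omega h|u|^{1-\gamma}\le\liminf_n\int_\Omega h|u_n|^{1-\gamma}$. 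Assembling these yields $I(u)\le\liminf_n I(u_n)=c$. Moreover, from $u_n\in N_1$ one gets $\int_\Omega\lambda h|u_n|^{1-\gamma}\le(a+b\|u_n\|^2)\|u_n\|^2+\int_\Omega\phi_{u_n}u_n^2\le C$ uniformly in $n$, by boundedness of $\{u_n\}$ and Lemma~\ref{phi prop}(i); hence, by Fatou, $\int_\Omega h|u|^{1-\gamma}<\infty$, and since $h>0$ a.e.\ this forces $u\neq0$ a.e., in particular $u\not\equiv0$.

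Then, since $\int_\Omega h|u|^{1-\gamma}<\infty$, Lemma~\ref{l2.2} applies at $u$: there is a unique $t_0=t_0(u)>0$ with $t_0u\in N_2\subset N_1$, with $tu\in N_1$ precisely for $t\ge t_0$, and with $I(t_0u)=\min_{t\ge0}I(tu)$ the (strict, since the fibering map has a single critical point) global minimum; equivalently, $u\in N_1\iff t_0\le1$ and $u\in N_2\iff t_0=1$. If $t_0>1$, then $c\le I(t_0u)<I(u)\le c$ --- using $t_0u\in N_1$, strictness of the minimum, and the previous step --- a contradiction, so $t_0\le1$; thus $u\in N_1$, whence $I(u)\ge c$ and therefore $I(u)=c$. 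If instead $t_0<1$, i.e.\ $u\in N_1\setminus N_2$, then $c=I(u)>I(t_0u)\ge c$ (again $t_0u\in N_1$), a contradiction; hence $t_0=1$ and $u\in N_2$. This gives the lemma, with $\{u_n\}$ the minimizing sequence.

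The main obstacle is transferring the defining constraint from $\{u_n\}$ to $u$ under only weak convergence: the closedness of $N_1$ (Lemma~\ref{l2.}) needs strong convergence and so is not directly available, so membership $u\in N_1$, and then $u\in N_2$, has to be read off the fibering map by comparing $I(u)$ with $I(t_0(u)u)$ and exploiting $t_0(u)u\in N_2\subset N_1$. Two points carry the argument: the uniform bound on $\int_\Omega h|u_n|^{1-\gamma}$ extracted from $u_n\in N_1$, which is what prevents $u\equiv0$ and makes Lemma~\ref{l2.2} applicable at $u$; and the one-sided control of the strong singularity $\int_\Omega h|u|^{1-\gamma}$ under a.e.\ convergence via Fatou, which works out precisely because $\gamma>1$ makes the sign of $\tfrac{1}{1-\gamma}$ compatible with lower semicontinuity.
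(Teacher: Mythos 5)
Your proposal is correct and follows essentially the same route as the paper: a bounded minimizing sequence, weak lower semicontinuity plus Fatou's lemma (using that $\gamma>1$ makes the sign of $\tfrac{1}{1-\gamma}$ favorable) to get $I(u)\le c$ and $\int_\Omega h|u|^{1-\gamma}<\infty$, and then the fibering map of Lemma \ref{l2.2} to force $t_0=1$, i.e.\ $u\in N_2$, which is precisely the chain \eqref{strong sub}. The only difference is that the paper extracts the minimizing sequence via Ekeland's variational principle (whose property (ii) is needed later in the proof of Theorem \ref{main theorem}) and normalizes $u_n>0$, whereas your generic minimizing sequence and explicit trichotomy on $t_0$ suffice for the lemma as stated.
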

\begin{proof}
	It is easy to see that the functional $I$, defined as in \eqref{functional} is lower semicontinuous. Since, $N_1 (\neq\emptyset)$ is closed, then by using Ekeland's variational principle for $c=\inf_{N_{1}}I$, we can extract a minimizing sequence $\{u_{n}\}\subset N_{1}$ such that
\begin{enumerate}[label=(\roman*)]
	\item $I(u_{n}) \leq\ c+\frac{1}{n^{2}}$;
	\item $I(u_{n}) \leq I(v)+\frac{1}{n}\| u_{n}-v\|$, $\forall\, v\in N_{1}.$
\end{enumerate}
Now, from the fact $I(|u|) =I(u)$, one can assume that $u_{n} >0$ a.e. in $\Omega$. By Lemma \ref{l2.}, on using the coerciveness of $I$, we have $\{u_{n}\}$ is bounded. Therefore, upto to a subsequence, we have
\begin{enumerate}[label=(\roman*)]
	\item $u_{n}\rightharpoonup u$ weakly in $X_0$
	\item $u_{n}\rightarrow u$ strongly in $L^{q}(\Omega)$ for $q\in [1, 2_s^*)$, $2_s^*=\frac{2N}{N-2s}$, and
	\item $u_{n}(x) \rightarrow u(x)$ pointwise a. e. in $\Omega$.
\end{enumerate}
Since, $N_1$ is closed and $\gamma>1$, then from Fatou's lemma and $\{u_{n}\}\subset N_{1}$, we get $u>0$ a. e. in $\Omega$, $\int_{\Omega}\lambda h(x)|u|^{1-\gamma}<\infty$ and $u\in N_1$. On using Fatou's lemma and Lemma \ref{l2.2}, we have
\begin{align}\label{strong sub}
	\inf_{N_{1}}I&=\liminf_{n\rightarrow\infty} I(u_{n})\nonumber\\
	&=\liminf_{n\rightarrow\infty}\left(\frac{a}{2}\|u_{n}\|^{2}+\frac{b}{4}\|u_{n}\|^{4}+\frac{1}{4}\int_{\Omega}\phi_{u_{n}}u_{n}^{2}-\frac{1}{1-\gamma}\int_{\Omega}\lambda h(x)u_{n}^{1-\gamma}-\frac{1}{1+p}\int_{\Omega}k(x)u_{n}^{1+p}\right)\nonumber\\
	&\geq\liminf_{n\rightarrow\infty}\frac{a}{2}\|u_{n}\|^{2}+\frac{b}{4}\liminf_{n\rightarrow\infty}\|u_{n}\|^{4}+\frac{1}{4}\phi_{u}u^{2}-\frac{1}{1-\gamma}\liminf_{n\rightarrow\infty}\int_{\Omega}\lambda h(x)u_{n}^{1-\gamma}-\frac{1}{1+p}\int_{\Omega}k(x){u}^{1+p}\nonumber\\
	&\geq\frac{a}{2}\|u\|^{2}+\frac{b}{4}\|u\|^{4}+\frac{1}{4}\int_{\Omega}\phi_{u}u^{2}-\frac{1}{1-\gamma}\int_{\Omega}\lambda h(x)u^{1-\gamma}-\frac{1}{p+1}\int_{\Omega}k(x)u^{1+p}\nonumber\\
	&=I(u)\geq I(t_0u)\nonumber\\
	&\geq\inf_{N_{2}}I\geq\inf_{N_{1}}I.
\end{align}
Hence, $t_0=1$, i.e. $u\in N_2$ and hence $I(u)=c$. This completes the proof.
\end{proof}
\noindent{\bf Proof of Theorem \ref{main theorem}.}\\
	Suppose $u$ is a solution to the problem \eqref{problem main}, then the compatibility condition \eqref{compatibility} must be true. We will prove the other part. Let \eqref{compatibility} be true. We first prove the following inequality which is essential to guarantee the existence of solution.
		\begin{align}\label{1.7}
		(a+b\|u\|^{2})\int_{\Omega}(-\Delta)^{s/2} u\cdot(-\Delta)^{s/2}\psi+\int_{\Omega}\phi_{u}u\psi -\int_{\Omega}k(x)u^{p}\psi\geq\int_{\Omega}\lambda h(x)u^{-\gamma}\psi.
		\end{align}
for every nonnegative $\psi\in X_0$. We will divide the proof of \eqref{1.7} in two cases, i.e. either $\{u_{n}\}\subset N_{1}\setminus N_{2}$ or $\{u_{n}\}\subset N_{2}$.\\	
		{\bf Case 1.} {\it $\{u_{n}\}\subset N_{1}\setminus N_{2}$ for $n$ large enough.}\\
		For a given nonnegative function $\psi\in X_0$, by $\{u_{n}\}\subset N_{1}\setminus N_{2}$, we derive that
		\begin{equation*}
		(a+b\| u_{n}\|^{2})\| u_{n}\|^{2}+\int_{\Omega}\phi_{u_{n}}u_{n}^{2}-\int_{\Omega}k(x)|u_{n}|^{1+p}>\int_{\Omega}\lambda h(x)u_{n}^{1-\gamma}\geq\int_{\Omega}\lambda h(x)(u_{n}+t\psi)^{1-\gamma},\ t\geq 0,
		\end{equation*}
		then by the continuity, there exists $t>0$ small enough such that
		\begin{equation*}
		(a+b\| u_{n}+t\psi\|^{2})\| u_{n}+t\psi\|^{2}+\int_{\Omega}\phi_{u_{n}+t\psi}(u_{n}+t\psi)^{2}-\int_{\Omega}k(x)(u_{n}+t\psi)^{1+p}\geq\int_{\Omega}\lambda h(x)(u_{n}+t\psi)^{1-\gamma},	
		\end{equation*}
		that is $(u_{n}+t\psi)\in N_{1}$. Then, by (ii) of Ekeland's variational principle, we have
		\begin{equation*}
		\frac{1}{n}\| t\psi\|+I(u_{n}+t\psi)-I(u_{n})\geq 0.
		\end{equation*}
		That is,
		\begin{align*}
		\frac{\|t\psi\|}{n}&+\frac{a}{2}\left(\|u_{n}+t\psi\|^{2}-\|u_{n}\|^{2}\right)+\frac{b}{4}\left(\|u_{n}+t\psi\|^{4}-\|u_{n}\|^{4}\right)\\
		&+\frac{1}{4}\int_{\Omega}\left(\phi_{u_{n}+t\psi}(u_{n}+t\psi)^{2}-\phi_{u_{n}}u_{n}^{2}\right)-\frac{1}{1+p}\int_{\Omega}k(x)\left((u_{n}+t\psi)^{1+p}-u_{n}^{1+p}\right)\\
		&\geq\frac{1}{1-\gamma}\int_{\Omega}\lambda h(x)\left((u_{n}+t\psi)^{1-\gamma}-u_{n}^{1-\gamma}\right).
		\end{align*}
		Dividing by $t>0$ and by Fatou's lemma, we conclude that
		\begin{align*}
		\frac{1}{n}\|\psi\|+(a+b\|u_{n}\|^{2})\int_{\Omega}(-\Delta)^{s/2} u_{n}\cdot(-\Delta)^{s/2}\psi &+\int_{\Omega}\phi_{u_{n}}u_{n}\psi-\int_{\Omega}k(x)u_{n}^{p}\psi\\
		&\geq\lim_{t\rightarrow 0} \inf\int_{\Omega}\frac{\lambda h(x)\left((u_{n}+t\psi)^{1-\gamma}-u_{n}^{1-\gamma}\right)}{(1-\gamma)t}\\
		&\geq\int_{\Omega}\lambda h(x)u_{n}^{-\gamma}\psi,
		\end{align*}
		Now by Lemma \ref{lemma min-seq}, we have $I(u)=c$ for some $u\in N_2$. Therefore, by using \eqref{strong sub}, we obtain $\|u_n\|^2\rightarrow\|u\|^2$ for every $a>0, b\geq0$ and similarly, $\|u_n\|^4\rightarrow\|u\|^4$ for every $b>0$ with $a=0$. In both of the cases, $\|u_n\|\rightarrow\|u\|$ as $n\rightarrow\infty$. Thus, by applying Fatou's lemma once again, we get
		\begin{equation*}
		\left(a+b\| u\|^{2}\right)\int_{\Omega}(-\Delta)^{s/2} u\cdot(-\Delta)^{s/2}\psi+\int_{\Omega}\phi_{u}u\psi -\int_{\Omega}k(x)u^{p}\psi\geq\int_{\Omega}\lambda h(x)u^{-\gamma}\psi.
		\end{equation*}
		{\bf Case 2.} {\it There exists a subsequence of $\{u_{n}\}$ (still denoted by $\{u_{n}\}$) belonging to $N_{2}$.}\\
		In this case, we can also show that \eqref{1.7} holds. For given nonnegative $\psi\in X_0$, for each $u_{n}\in N_{2}$ and $t\geq 0$,
		\begin{equation*}
		\int_{\Omega}\lambda h(x)(u_{n}+t\psi)^{1-\gamma}\leq\int_{\Omega}\lambda h(x)u_{n}^{1-\gamma}<\infty.
		\end{equation*}
		By Lemma \ref{l2.2}, there exists $t(u_{n}+t\psi)>0$ satisfying $t(u_{n}+t\psi)(u_{n}+t\psi)\in N_{2}$. For clarity, we denote $\theta_{n}(t)=t(u_{n}+t\psi)$, it is obvious that $\theta_{n}(0)=1$. By $\theta_{n}(t)(u_{n}+t\psi)\in N_{2}$, we have
		\begin{align}\label{2.4}
		a\theta_{n}^{2}(t)\|u_{n}+t\psi\|^{2}&+b\theta_{n}^{4}(t)\|u_{n}+t\psi\|^{4}+\theta_{n}^{4}(t)\int_{\Omega}(\phi_{u_{n}+t\psi}(u_{n}+t(\psi)^{2}\nonumber\\
		&-\theta_{n}^{1-\gamma}(t)\int_{\Omega}\lambda h(x)(u_{n}+t\psi)^{1-\gamma}-\theta_{n}^{1+p}(t)\int_{\Omega}k(x)(u_{n}+t\psi)^{1+p}=0. 
		\end{align}
		By Lemma \ref{l2.2}, for given $n$, $\theta_{n}$ is continuous on $[0, \infty)$. We denote $D_{+}\theta_{n}(0)$ the right lower Dini derivative of $\theta_{n}$ at zero. Next, we shall show that $\theta_{n}$ has uniform behavior at zero with respect to $n$, i.e., $|D_{+}\theta_{n}(0)|\leq{C}$ for suitable $C>0$ independent of $n$. By the definition of $D_{+}\theta_{n}(0)= \lim_{t\rightarrow 0^+}\inf\frac{\theta_{n}(t)-\theta_{n}(0)}{t}$, there exists a sequence $\{t_{k}\}$ with $t_{k}>0$ and $t_{k}\rightarrow 0$ as $k\rightarrow\infty$ such that
		\begin{equation*}
		D_{+}\theta_{n}(0)=\lim_{k\rightarrow\infty}\frac{\theta_{n}(t_{k})-\theta_{n}(0)}{t_{k}}.
		\end{equation*}
		By $u_{n}\in N_{2}$ and \eqref{2.4}, for $t>0$, we get that
		\begin{align}\label{2.5}
		0=\frac{1}{t}&\left[a\left(\theta_{n}^{2}(t)-1\right)\|u_{n}+t\psi\|^{2}+\left(\theta_{n}^{4}(t)-1\right)\left(b\|u_{n}+t\psi\|^{4}+\int_{\Omega}\phi_{u_{n}+t\psi}(u_{n}+t\psi)^{2}\right)\right.\nonumber\\
		&-\left(\theta_{n}^{1-\gamma}(t)-1\right)\int_{\Omega}\lambda h(x)(u_{n}+t\psi)^{1-\gamma}-\left(\theta_{n}^{1+p}(t)-1\right)\int_{\Omega}k(x)(u_{n}+t\psi)^{1+p}\nonumber\\
		&+a\left(\|u_{n}+t\psi\|^{2}-\|u_{n}\|^{2}\right)+b\left(\|u_{n}+t\psi\|^{4}-\|u_{n}\|^{4}\right)+\int_{\Omega}(\phi_{u_{n}+t\psi}(u_{n}+t\psi)^{2}-\phi_{u_{n}}u_{n}^{2})\nonumber\\
		&\left.-\int_{\Omega}\lambda h(x)\left((u_{n}+t\psi)^{1-\gamma}-u_{n}^{1-\gamma}\right)-\int_{\Omega}k(x)\left((u_{n}+t\psi)^{1+p}-u_{n}^{1+p}\right)\right]\nonumber\\
		&\geq\frac{\theta_{n}(t)-1}{t}\left[a(\theta_{n}(t)+1)\|u_{n}+t\psi\|^{2}+\frac{\theta_{n}^{4}(t)-1}{\theta_{n}(t)-1}\left(b\|u_{n}+t\psi\|^{4}+\int_{\Omega}\phi_{u_{n}+t\psi}(u_{n}+t\psi)^{2}\right)\right.\nonumber\\
		&\left.-\frac{\theta_{n}^{1-\gamma}(t)-1}{\theta_{n}(t)-1}\int_{\Omega}\lambda h(x)(u_{n}+t\psi)^{1-\gamma}-\frac{\theta_{n}^{1+p}(t)-1}{\theta_{n}(t)-1}\int_{\Omega}k(x)(u_{n}+t\psi)^{1+p}\right]\nonumber\\
		&+\frac{1}{t}\left[a(\|u_{n}+t\psi\|^{2}-\|u_{n}\|^{2})+b(\|u_{n}+t\psi\|^{4}-\|u_{n}\|^{4})+\int_{\Omega}(\phi_{u_{n}+t\psi}(u_{n}+t\psi)^{2}-\phi_{u_{n}}u_{n}^{2})\right.\nonumber\\
		&\left.-\int_{\Omega}k(x)\left((u_{n}+t\psi)^{1+p}-u_{n}^{1+p}\right)\right],
		\end{align}
		replacing $t$ in \eqref{2.5} with $t_{k}$ and letting $k\rightarrow\infty$, we deduce that
		\begin{align*}
		0&\geq{D}_{+}\theta_{n}(0)\left[2a\|u_{n}\|^{2}+4\left(b\|u_{n}\|^{4}+\int_{\Omega}\phi_{u_{n}}u_{n}^{2}\right)-(1-\gamma)\int_{\Omega}\lambda h(x)u_{n}^{1-\gamma}-(1+p)\int_{\Omega}k(x)u_{n}^{1+p}\right]\\
		&+(2a+4b\|u_{n}\|^{2})\int_{\Omega}(-\Delta)^{s/2}{u}_{n}\cdot(-\Delta)^{s/2}\psi+4\int_{\Omega}\phi_{u_{n}}u_{n}\psi-(p+1)\int_{\Omega}k(x)u_{n}^{p}\\
		&=D_{+}\theta_{n}(0)\left[a(1-p)\|u_{n}\|^{2}+(3-p)\left(b\|u_{n}\|^{4}+\int_{\Omega}\phi_{u_{n}}u_{n}^{2}\right)+(p+\gamma)\int_{\Omega}\lambda h(x)u_{n}^{1+\gamma}\right]\\
		&+(2a+4b\|u_{n}\|^{2})\int_{\Omega}(-\Delta)^{s/2}{u}_{n}\cdot(-\Delta)^{s/2}\psi+4\int_{\Omega}\phi_{u_{n}}u_{n}\psi-(p+1)\int_{\Omega}k(x)u_{n}^{p}\psi.
		\end{align*}
		By Lemma \ref{l2.2}, we get that
		\begin{align*}
		D_{+}\theta_{n}(0)(a(1-p)\alpha^{2}+(3-p)b\alpha^{4})&+(2a+4b\|u_{n}\|^{2})\int_{\Omega}(-\Delta)^{s/2} u_{n}\cdot(-\Delta)^{s/2}\psi\\
		&+4\int_{\Omega}\phi_{u_{n}}u_{n}\psi-(p+1)\int_{\Omega}k(x)u_{n}^{p}\psi\leq 0.
		\end{align*}
		Since, $p\in(0,1)$, this implies that $D_{+}\theta_{n}(0)\neq+\infty$ and $D_{+}\theta_{n}(0)$ is bounded from above uniformly in $n$. That is $D_{+}f_{n}(0)\in[-\infty, +\infty$) and
		\begin{equation}\label{2.6}
		D_{+}\theta_{n}(0)\leq C_{1} ~\text{uniformly for}~ n
		\end{equation}	
		for some $C_{1}>0.$\\
		On the other hand, we can obtain the lower bound for $D_{+}\theta_{n}(0)$. If $D_{+}\theta_{n}(0)\geq 0$ for $n$ large, this gives the results. Otherwise, up to a subsequence, still denoted by $D_{+}\theta_{n}(0)$ such that $D_{+}\theta_{n}(0)$ are negative (possibly $-\infty$). Then by (ii) of Ekeland's variational principle, for $t>0$,	we have
		\begin{align*}
		&\frac{(1-\theta_{n}(t))\|u_{n}\|+t\theta_{n}(t)\|\psi\|}{n}\\
		&\geq\frac{\|u_{n}-\theta_{n}(t)(u_{n}+t\psi)\|}{n}\\
		&\geq I(u_{n})-I(f_{n}(t)(u_{n}+t\psi))\\
		&=\frac{a(\gamma+1)}{2(\gamma-1)}(\|u_{n}\|^{2}-\|u_{n}+t\psi\|^{2})+\frac{\gamma+3}{4(\gamma-1)}\left[b(\|u_{n}\|^{4}-\|u_{n}+t\psi\|^{4})+\int_{\Omega}(\phi_{u_{n}}u_{n}^{2}-\phi_{u_{n}+t\psi}(u_{n}+t\psi)^{2})\right]\\
		&-\frac{p+\gamma}{(\gamma-1)(p+1)}\int_{\Omega}k(x)\left(u_{n}^{p+1}-(u_{n}+t\psi)^{p+1}\right)-\frac{a(\gamma+1)}{2(\gamma-1)}(f_{n}^{2}(t)-1)\|u_{n}+t\psi\|^{2}\\
		&-\frac{\gamma+3}{4(\gamma-1)}(f_{n}^4(t)-1)\left(b\|u_{n}+t\psi\|^{4}+\int_{\Omega}(\phi_{u_{n}+t\psi}(u_{n}+t\psi)^{2})\right)\\
		&+\frac{p+\gamma}{(\gamma-1)(p+1)}(f_{n}^{p+1}(t)-1)\int_{\Omega}k(x)(u_{n}+t\psi)^{p+1}.
		\end{align*}
		Then,
		\begin{align}\label{2.7}
		\frac{t\theta_{n}(t)}{n}\|\psi\|&\geq{I}(u_{n})-I(\theta_{n}(t)(u_{n}+t\psi))+\frac{\theta_{n}(t)-1}{n}\|u_{n}\|\nonumber\\
		&=(\theta_{n}(t)-1)\left[\frac{\|u_{n}\|}{n}-\frac{a(\gamma+1)}{2(\gamma-1)}(\theta_{n}(t)+1)\|u_{n}+t\psi\|^{2}\right.\nonumber\\
		&-\frac{\gamma+3}{4(\gamma-1)}\frac{\theta_{n}^{4}(t)-1}{\theta_{n}(t)-1}\left(b\|u_{n}+t\psi\|^{4}+\int_{\Omega}\phi_{u_{n}+t\psi}(u_{n}+t\psi)^{2}\right)\nonumber\\
		&\left.+\frac{p+\gamma}{(\gamma-1)(p+1)}\frac{\theta_{n}^{p+1}(t)-1}{\theta_{n}(t)-1}\int_{\Omega}k(x)(u_{n}+t\psi)^{p+1}\right]+\frac{a(\gamma+1)}{2(\gamma-1)}(\|u_{n}\|^{2}-\|u_{n}+t\psi\|^{2})\nonumber\\
		&+\frac{\gamma+3}{4(\gamma-1)}\left[b(\|u_{n}\|^{4}-\|u_{n}+t\psi\|^{4})+\int_{\Omega}(\phi_{u_{n}}u_{n}^{2}-\phi_{u_{n}+t\psi}(u_{n}+t\psi)^{2})\right]\nonumber\\
		&-\frac{p+\gamma}{(\gamma-1)(p+1)}\int_{\Omega}k(x)\left(u_{n}^{p+1}-(u_{n}+t\psi)^{p+1}\right)
		\end{align}
		Then, replacing $t$ in \eqref{2.7}, dividing $t_{k}$ and letting $k\rightarrow\infty$, we deduce that
		\begin{align}\label{2.8}
		\frac{\|\psi\|}{n}&\geq{D}_{+}\theta_{n}(0)\left[\frac{\|u_{n}\|^{2}}{n}-\frac{a(\gamma+1)}{\gamma-1}\|u_{n}\|^{2}-\frac{\gamma+3}{\gamma-1}\left(b\|u_{n}\|^{4}+\int_{\Omega}\phi_{u_{n}}u_{n}^{2}\right)+\frac{p+\gamma}{\gamma-1}\int_{\Omega}k(x)u_{n}^{p+1}\right]\nonumber\\
		&-\frac{a(\gamma+1)}{\gamma-1}\int_{\Omega}(-\Delta)^{s/2}{u}_{n}\cdot(-\Delta)^{s/2}\psi-\frac{\gamma+3}{\gamma-1}\left(b\|u_{n}\|^{2}\int_{\Omega}(-\Delta)^{s/2}{u}_{n}\cdot(-\Delta)^{s/2}\psi+\int_{\Omega}\phi_{u_{n}}u_{n}\psi\right)\nonumber\\
		&+\frac{p+\gamma}{\gamma-1}\int_{\Omega}k(x)u_{n}^{p}\psi.
		\end{align}
		Since
		\begin{align*}
		&-\frac{a(\gamma+1)}{\gamma-1}\|u_{n}\|^{2}-\frac{\gamma+3}{\gamma-1}\left(b\|u_{n}\|^{4}+\int_{\Omega}\phi_{u_{n}}u_{n}^{2}\right)+\frac{p+\gamma}{\gamma-1}\int_{\Omega}k(x)u_{n}^{p+1}\\
		&=-\frac{1}{\gamma-1}\left[a(1-p)\|u_{n}\|^{2}+(3-p)\left(b\|u_{n}\|^{2}+\int_{\Omega}\phi_{u_{n}}u_{n}^{2}\right)+(\gamma+p)\int_{\Omega}\lambda h(x)u_{n}^{1-\gamma}\right]\\
		&\leq-\frac{a(1-p)}{\gamma-1}\|u_{n}\|^{2}\\
		&\leq-\frac{a(1-p)}{\gamma-1}C^{2}.
		\end{align*}
		So, from \eqref{2.8}, we have
		\begin{align}\label{2.9}
		\frac{\|\psi\|}{n}&\geq{D}_{+}\theta_{n}(0)\left(\frac{\|u_{n}\|^{2}}{n}-\frac{(1-p)a\alpha^{2}}{\gamma-1}\right)-\frac{a(\gamma+1)}{\gamma-1}\int_{\Omega}(-\Delta)^{s/2}{u}_{n}\cdot(-\Delta)^{s/2}\psi\nonumber\\
		&-\frac{\gamma+3}{\gamma-1}\left(b\|u_{n}\|^{2}\int_{\Omega}(-\Delta)^{s/2}{u}_{n}\cdot(-\Delta)^{s/2}\psi+\int_{\Omega}\phi_{u_{n}}u_{n}\psi\right)+\frac{p+\gamma}{\gamma-1}\int_{\Omega}k(x)u_{n}^{p}\psi.
		\end{align}
		We choose $n$ large enough such that $\frac{\|u_{n}\|^{2}}{n} -\frac{(1-p)aC^{2}}{\gamma-1} <0$, we know from \eqref{2.9} that $D_{+}\theta_{n}(0)\neq-\infty$ as $n\rightarrow\infty$. That is $D_{+}\theta_{n}(0)$ is bounded from below uniformly for $n$ large enough. Hence from \eqref{2.6}, we have
		\begin{equation*}
		|D_{+}\theta_{n}(0)|\leq{C} ~\text{for $n$ large enough},
		\end{equation*}	
		for some $C>0$. Again, by (ii) of Ekeland's variation principle, we also have
		\begin{align*}
		&\frac{|\theta_{n}(t)-1|\|u_{n}\|+|t\theta_{n}(t)|\|\psi\|}{n}\\
		&\geq\frac{\|u_{n}-\theta_{n}(t)(u_{n}+t\psi)\|}{n}\\
		&\geq I(u_{n})-I(\theta_{n}(t)(u_{n}+t\psi))\\
		&=\frac{a}{2}(\|u_{n}\|^{2}-\|u_{n}+t\psi\|^{2})+\frac{1}{4}\left[b(\|u_{n}\|^{4}-\|u_{n}+t\psi\|^{4})+\int_{\Omega}(\phi_{u_{n}}u_{n}^{2}-\phi_{u_{n}+t\psi}(u_{n}+t\psi)^{2})\right]\\
		&+\frac{1}{\gamma-1}\int_{\Omega}\lambda h(x)(u_{n}^{1-\gamma}-(u_{n}+t\psi)^{1-\gamma})-\frac{1}{p+1}\int_{\Omega}k(x)(u_{n}^{p+1}-(u_{n}+t\psi)^{p+1})\\
		&+\frac{a}{2}(1-\theta_{n}^{2}(t))\|u_{n}+t\psi\|^{2}+\frac{1}{4}(1-\theta_{n}^{4}(t))\left(b\|u_{n}+t\psi\|^{4}+\int_{\Omega}\phi_{u_{n}+t\psi}(u_{n}+t\psi)^{2}\right)\\
		&+\frac{1}{\gamma-1}(1-\theta_{n}^{1-\gamma}(t))\int_{\Omega}\lambda h(x)(u_{n}+t\psi)^{1-\gamma}-\frac{1}{p+1}(1-\theta_{n}^{p+1}(t))\int_{\Omega}k(x)(u_{n}+t\psi)^{p+1}.
		\end{align*}
		The above inequality also holds for $t=t_{k}$, then dividing by $t_{k}>0$ and passing to the limit as $k\rightarrow\infty$, then by $u_{n}\in N_{2}$, we obtain that
		\begin{align*}
		\frac{|D_{+}\theta_{n}(0)|\|u_{n}\|}{n}+\frac{\|\psi\|}{n}&\geq-(a+b\|u_{n}\|^{2})\int_{\Omega}(-\Delta)^{s/2}{u}_{n}\cdot(-\Delta)^{s/2}\psi-\int_{\Omega}\phi_{u_{n}}u_{n}\psi+\int_{\Omega}k(x)u_{n}^{p}\psi\\
		&+\lim_{k\rightarrow\infty}\inf\frac{1}{\gamma-1}\int_{\Omega}\frac{\lambda h(x)(u_{n}^{1-\gamma}-(u_{n}+t_{k}\psi)^{1-\gamma})}{t_{k}}.
		\end{align*}
		Again, proceeding to the similar arguments as in {\bf Case 1}, we can obtain $\|u_n\|\rightarrow\|u\|$ as $n\rightarrow\infty$. Hence, by using $|D_{+}f_{n}(0)|\leq C$, Fatou's lemma and the strong convergence, we obtain $\int_{\Omega}\lambda h(x)u^{-\gamma}\psi<\infty$ and
		\begin{equation*}
		(a+b\|u\|^{2})\int_{\Omega}(-\Delta)^{s/2}{u}\cdot(-\Delta)^{s/2}\psi+\int_{\Omega}\phi_{u}u\psi-\int_{\Omega}\lambda h(x)u^{-\gamma}\psi-\int_{\Omega}k(x)u^{p}\psi\geq 0.
		\end{equation*}
		Thus from {\bf Case 1} and {\bf Case 2}, we obtain that the above inequality holds for $\psi\in X_0$ with $\psi\geq 0$, that is the inequality \eqref{1.7} holds.\\
		We now prove that $u$ is a weak solution to the system \eqref{problem reduced} by using$u\in N_2$ and the inequality \eqref{1.7}. For $t>0$ and $\psi\in X_0$, we define
		$$\Omega_1=\{x\in\Omega:u(x)+t\psi(x)\geq0\}~\text{and}~\Omega_2=\{x\in\Omega:u(x)+t\psi(x)<0\}.$$
		Now on using $\psi_t=(u+t\psi)^+$ as the test function in \eqref{1.7}, we get
	{\small \begin{align*}
	0&\leq(a+b\|u\|^{2})\int_{\Omega}(-\Delta)^{s/2}{u}\cdot(-\Delta)^{s/2}\psi_t+\int_{\Omega}\phi_{u}u\psi_t-\int_{\Omega}\lambda h(x)u^{-\gamma}\psi_t-\int_{\Omega}k(x)u^{p}\psi_t\\
	&=(a+b\|u\|^{2})\int_{\Omega_{1}}(-\Delta)^{s/2}{u}\cdot(-\Delta)^{s/2}(u+t\psi)+\int_{\Omega_{1}}\phi_{u}u(u+t\psi)\\&\hspace{3cm}-\int_{\Omega_{1}}\lambda h(x)u^{-\gamma}(u+t\psi)-\int_{\Omega_{1}}k(x)u^{p}(u+t\psi)\\
	&=(a+b\|u\|^{2})\int_{\Omega}(-\Delta)^{s/2}{u}\cdot(-\Delta)^{s/2}(u+t\psi)+\int_{\Omega}\phi_{u}u(u+t\psi)\\
	&\hspace{3cm}-\int_{\Omega}\lambda h(x)u^{-\gamma}(u+t\psi) -\int_{\Omega}k(x)u^{p}(u+t\psi)\\
	&-\left[(a+b\|u\|^{2})\int_{\Omega_{2}}(-\Delta)^{s/2}{u}\cdot(-\Delta)^{s/2}(u+t\psi)+\int_{\Omega_{2}}\phi_{u}u(u+t\psi)\right.\\
	&\hspace{3cm}\left.-\int_{\Omega_{2}}\lambda h(x)u^{-\gamma}(u+t\psi) -\int_{\Omega_{2}}k(x)u^{p}(u+t\psi)\right]\\
	&\leq t\left[(a+b\|u\|^{2})\int_{\Omega}(-\Delta)^{s/2}{u}\cdot(-\Delta)^{s/2}\psi+\int_{\Omega}\phi_{u}u\psi-\int_{\Omega}\lambda h(x)u^{-\gamma}\psi-\int_{\Omega}k(x)u^{p}\psi\right.\\
	&\hspace{3cm}\left.-(a+b\|u\|^{2})\int_{\Omega_{2}}(-\Delta)^{s/2}{u}\cdot(-\Delta)^{s/2}\psi-\int_{\Omega_{2}}\phi_{u}u\psi\right]	
	\end{align*}}
	Since, $u>0$ almost everywhere in $\Omega$ and the measure of $\Omega_{2}$ tends to zero as $t\rightarrow 0$, then dividing by $t>0$ and letting $t\rightarrow 0$, we obtain that
	\begin{equation*}
	(a+b\|u\|^{2})\int_{\Omega}(-\Delta)^{s/2}{u}\cdot(-\Delta)^{s/2}\psi+\int_{\Omega}\phi_{u}u\psi-\int_{\Omega}\lambda h(x)u^{-\gamma}\psi-\int_{\Omega}k(x)u^{p}\psi\geq 0, ~\psi\in X_0.
	\end{equation*}
	This inequality also holds for $-\psi$, so we have
	\begin{equation*}
	(a+b\|u\|^{2})\int_{\Omega}(-\Delta)^{s/2}{u}\cdot(-\Delta)^{s/2}\psi+\int_{\Omega}\phi_{u}u\psi-\int_{\Omega}\lambda h(x)u^{-\gamma}\psi-\int_{\Omega}k(x)u^{p}\psi=0, ~\psi\in X_0.
	\end{equation*}
	Thus, $u\in X_0$ is a solution of system \eqref{problem reduced}.\\
	\noindent {\it\bf Uniqueness of solution.} Assume the compatibility condition \eqref{compatibility} holds. Let $u, v\in X_0$ be two weak solutions to system \eqref{problem reduced}. Then from Definition \ref{weak solution defn}, we have
	\begin{equation}\label{2.10}
		(a+b\|u\|^{2})\int_{\Omega}(-\Delta)^{s/2}{u}\cdot(-\Delta)^{s/2}(u-v)+\int_{\Omega}\phi_{u}u(u-v)=\int_{\Omega}\lambda h(x)u^{-\gamma}(u-v)+\int_{\Omega}f(x,u)(u-v).
	\end{equation}
	and
	\begin{equation}\label{2.11}
		(a+b\|v\|^{2})\int_{\Omega}(-\Delta)^{s/2}{v}\cdot(-\Delta)^{s/2}(u-v)+\int_{\Omega}\phi_{v}v(u-v)=\int_{\Omega}\lambda h(x)v^{-\gamma}(u-v)+\int_{\Omega}f(x,u)(u-v).
	\end{equation}
	On subtracting \eqref{2.11} from \eqref{2.10}, we get
	\begin{align}\label{unique}
	\begin{split}
		&a\|u-v\|^{2}+b(\|u\|^{4}+\|v\|^{4}-\|u\|^{2}(u,v)-\|v\|^{2}(u,v))+\int_{\Omega}(\phi_{u}u-\phi_{v}v)(u-v)\\
		&=\int_{\Omega}\lambda h(x)(u^{-\gamma}-v^{-\gamma})(u-v) +\int_{\Omega}(f(x,u)-f(x,v))(u-v)\\
		&=\int_{\Omega}\lambda h(x)(u^{-\gamma}-v^{-\gamma})(u-v),~(\text{by using}~f(x;\cdot)\equiv0).
		\end{split}
	\end{align}
	Now, on applying H\"{o}lder's inequality, we have
	\begin{equation*}
	\|u\|^{4}+\|v\|^{4}-\|u\|^{2}(u,v)-\|v\|^{2}(u,v)\geq(\|u\|-\|v\|)^2(\|u\|^2+\|u\|\|v\|+\|v\|^2)\geq 0.
	\end{equation*}
	and $\int_{\Omega}\lambda h(x)(u^{-\gamma}-v^{-\gamma})(u-v)\leq 0$, since $\gamma>0$. Therefore, on using $a, b\geq0$ with $a+b>0$ and from Lemma \ref{phi prop}, we deduce that $\|u-v\|^{2}\leq0$.
	Hence, the solution to the system \eqref{problem reduced} is unique.
	\begin{remark}
		Observe that, if we replace $f$ by $-f$ in the problem \eqref{problem reduced}, then from \eqref{unique} one can conclude that the problem \eqref{problem reduced} possesses a unique solution.
	\end{remark}
	\begin{remark}
		The existence of solution to the problem \eqref{problem reduced}, for $\gamma=1$ can be obtained as a limit of the following sequence of problems
		\begin{align*}
		\begin{split}
		\left(a+b[u]^2\right)(-\Delta)^{s} u+\phi_u u&=\lambda\frac{h(x)}{(u+\frac{1}{n})}+f(x,u)~\text{in}~\Omega,\\
		u&>0~\text{in}~\Omega,\\
		u&=0~\text{in}~\mathbb{R}^N\setminus\Omega,
		\end{split}
		\end{align*}
	\end{remark}

	\section*{Acknowledgement}
	The author thanks the Council of Scientific and Industrial Research (CSIR), India, for the financial assistantship received to carry out this research work (CSIR no. 09/983(0013)/2017-EMR-I). The author also thanks Prof. D. Choudhuri for the suggetions and discussions.
	
	\bibliographystyle{plain}

\end{document}